\title{{\bf On Alternating and Symmetric Groups Which Are Quasi OD-Characterizable}}
\author{{\bf Ali Reza Moghaddamfar} \\[0.1cm]
{\em Faculty of Mathematics, K. N. Toosi
University of Technology,}\\
{\em P. O. Box $16315$-$1618$, Tehran, Iran.}\\[0.1cm]
{\em E-mails}:  {\tt moghadam@ipm.ir} \  and  \ {\tt
moghadam@kntu.ac.ir}}
\newenvironment{proof}{\noindent {\em {Proof}}.}{$\square$
\medskip}
\newtheorem{theorem}{Theorem}[section]
\newtheorem{definition}[theorem]{Definition}
\newtheorem{corollary}[theorem]{Corollary}
\newtheorem{proposition}[theorem]{Proposition}
\newtheorem{lm}[theorem]{Lemma}
\newtheorem{problem}[theorem]{Problem}
\newtheorem{coj}[theorem]{Conjecture}
\begin{document}
\newcommand{\f}{\frac}
\newcommand{\sta}{\stackrel}
\maketitle
\begin{abstract}
\noindent Let $\Gamma(G)$ be the prime graph associated with a
finite group $G$ and $D(G)$ be the degree pattern of $G$. A
finite group $G$ is said to be $k$-fold OD-characterizable if
there exist exactly $k$ non-isomorphic groups $H$ such that
$|H|=|G|$ and $D(H)=D(G)$. The purpose of this article is twofold.
First, it shows that the symmetric group $S_{27}$ is $38$-fold
OD-charaterizable.  Second, it shows that there exist many
infinite families of alternating and symmetric groups, $\{A_n\}$
and $\{S_n\}$, which are $k$-fold OD-characterizable with $k>3$.
\end{abstract}

{\em Keywords}: OD-characterization, alternating group, symmetric
group, prime graph, spectrum, degree pattern, split extension,
subdirect product.

\renewcommand{\baselinestretch}{1}
\def\thefootnote{ \ }
\footnotetext{{\em $2000$ Mathematics Subject Classification}:
20D05, 20D06, 20D08.}
\section{Introduction}
Throughout the article, all the groups under consideration are
{\em finite} and simple groups are {\em nonabelian}. For a
natural number $n$, we denote by $\pi(n)$ the set of all prime
divisors of $n$ and put $\pi(G)=\pi(|G|)$. The {\em spectrum}
$\omega(G)$ of a group $G$ is the set of orders of elements in
$G$. The set $\omega(G)$ determines the {\em prime graph}
$\Gamma(G)$ whose vertex set is $\pi(G)$, and two vertices $p$
and $q$ are {\em adjacent} if and only if $pq\in \omega(G)$. For
two vertices $p$ and $q$ we will write $(p\sim q)_G$ to indicate
that $p$ is adjacent to $q$ in $\Gamma(G)$. Denote by $s(G)$ the
number of connected components of $\Gamma(G)$ and by $\pi_i(G)$
$(i= 1, 2, \ldots, s(G))$, the set of vertices of its $i$th
connected component. If $2\in \pi(G)$ then we assume that $2\in
\pi_1(G)$. We recall that the set of vertices of connected
components of all finite simple groups are obtained in \cite{k}
and \cite{w}.

As usual, the {\em degree ${\rm deg}(p)$ of a vertex} $p\in
\pi(G)$ in $\Gamma(G)$ is the number of edges incident on $p$. We
denote the set of all vertices of the prime graph $\Gamma(G)$
which are joined to all other vertices by $\Lambda(G)$. If the
prime divisors of $|G|$ are $p_1<p_2<\cdots<p_k$, then we define
$ {\rm D}(G):=(\deg(p_1), \deg(p_2), \ldots, \deg(p_k)),$
 which is called the {\em degree pattern of $G$}.

Given a group $H$, denote by $h_{\rm OD}(H)$ the number of
isomorphism classes of groups $G$ such that $|G|=|H|$ and ${\rm
D}(G)={\rm D}(H)$. Clearly, there are only finitely many
isomorphism types of groups of order $n$, because there are just
finitely many ways that an $n\times n$ multiplication table can
be filled in. Hence $1\leqslant h_{\rm OD}(H)<\infty$ for any
group $H$. In terms of function $h_{\rm OD}$, groups $H$ are
classified as follows:

\begin{definition} Any group $H$ satisfying $h_{\rm OD}(H)=k$ is said to be $k$-fold
OD-characterizable. Usually, a $1$-fold OD-characterizable group
is simply called an OD-characterizable group, and it is called
quasi OD-characterizable if it is $k$-fold OD-characterizable for
some $k>1$.
\end{definition}

This article is a continuation of my investigations of the
OD-characterizability of alternating and symmetric groups
initiated in \cite{kogani}. We keep the notation created and the
conventions made therein.

In a series of articles (see \cite{kogani}, \cite{m4}, \cite{m3},
\cite{m0}
 and \cite{z2}), it has been shown that many of the
alternating and symmetric groups are $n$-fold OD-characterizable
for $n\in \{1, 2, 3, 8\}$. These results are summarized in the
following proposition.

\begin{proposition}\label{proposition1}
The following statements hold:
\begin{itemize}
\item[{\rm (a)}]
 The alternating groups $A_p$, $A_{p+1}$,
$A_{p+2}$ and the symmetric groups $S_p$ and $S_{p+1}$, where $p$
is a prime number, are OD-characterizable.

\item[{\rm (b)}] The alternating group $A_{10}$ is $2$-fold
OD-characterizable, while the symmetric group $S_{10}$ is
$8$-fold  OD-charaterizable.

\item[{\rm (c)}]  The alternating groups $A_{p+3}$, where $p\neq 7$ is a prime number less than $100$,
are OD-characterizable.

\item[{\rm (d)}] The symmetric groups ${S}_{p+3}$,
where $p\neq 7$ is a prime number less than $100$, are
OD-characterizable or $3$-fold OD-characterizable.
\end{itemize}
\end{proposition}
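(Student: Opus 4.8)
The plan is to recognize that Proposition~\ref{proposition1} is a compendium collecting results proved separately in the cited series \cite{kogani}, \cite{m4}, \cite{m3}, \cite{m0}, \cite{z2}, so a proof proper amounts to matching each clause to its source. Rather than reproduce those arguments, I would describe the single method that underlies all of them and point to where the real work sits in each case. The common skeleton is: fix a target $G\in\{A_p,A_{p+1},A_{p+2},A_{p+3},S_p,S_{p+1},S_{p+3}\}$, compute the pair $(|G|,D(G))$ exactly, and then analyse every group $H$ sharing these two invariants.

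The first step is to determine $\Gamma(G)$, and hence $D(G)$, explicitly. The adjacency rule is purely combinatorial: for odd primes $r,s$ one has $(r\sim s)_{A_n}$ precisely when $A_n$ contains disjoint $r$- and $s$-cycles, i.e. when $r+s\le n$, while adjacencies involving $2$ (and the whole of $\Gamma(S_n)$, where odd permutations are allowed) are governed by the existence of permutations of the appropriate order on $n$ points. The structural point that makes the method run is that for these values of $n$ the large primes near the top of $\pi(G)$ occupy very small connected components, so that $\Gamma(G)$ is \emph{disconnected}; the component data of \cite{k} and \cite{w} guarantee this for the alternating and symmetric groups in question. Disconnectedness is exactly the hypothesis needed to invoke the Gruenberg--Kegel structure theorem.

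The second step is to take an arbitrary $H$ with $|H|=|G|$ and $D(H)=D(G)$ and use the degree pattern to recover the disconnection of $\Gamma(H)$. The Gruenberg--Kegel theorem then forces $H$ to be Frobenius, $2$-Frobenius, or an extension of a nilpotent normal subgroup by a group with a nonabelian simple composition factor $S$ whose order is divisible by the relevant top prime. Comparing $|H|=|G|$ and the degree data against the orders and prime graphs of the finite simple groups, via the classification together with the component lists of \cite{k},\cite{w}, pins $S$ down, typically as $A_n$ itself or as a simple group sharing its order, after which a bounded analysis of the admissible extensions determines $H$ up to isomorphism.

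The main obstacle, and the reason the count is sometimes $1$ and sometimes larger, is the final enumeration. For parts~(a) and~(c) one must show the extension data is rigid, forcing $H\cong G$ and $h_{\rm OD}(G)=1$; for parts~(b) and~(d) one must instead \emph{construct} the additional groups -- this is where the split extensions and subdirect products advertised in the keyword list enter -- and then prove the constructed list is exhaustive, so that $h_{\rm OD}(G)$ equals exactly $2$, $3$, or $8$. Establishing completeness, namely ruling out every further nonisomorphic group compatible with the Gruenberg--Kegel trichotomy while respecting both $|H|=|G|$ and the prescribed degree pattern, will be the delicate part. The excluded value $p=7$ and the appearance of $k=3$ in part~(d) are precisely symptoms of exceptional adjacencies among the small primes $2,3$ and the top prime in the $p+3$ cases, which perturb $D(G)$ and must be treated by hand.
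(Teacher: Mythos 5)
You are right that Proposition \ref{proposition1} is not proved in this paper at all: the author presents it explicitly as a summary of results established in \cite{kogani}, \cite{m4}, \cite{m3}, \cite{m0} and \cite{z2}, so identifying the clause-by-clause sources is essentially all the ``proof'' the paper supplies. Your reconstruction of the first step --- computing $\Gamma(G)$ and $D(G)$ from the adjacency criteria $r+s\leqslant n$, with the shifted condition for the prime $2$ in the alternating case --- is accurate and matches Corollaries \ref{c-alternating} and \ref{c-symmetric}.

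The genuine problem is your claim that disconnectedness of the prime graph is the engine in all four parts, with the Gruenberg--Kegel trichotomy doing the structural work. That is true only for part (a): for $n\in\{p,p+1,p+2\}$ the prime $p$ satisfies $p+q>n$ for every odd prime $q$ and $p+2>n-2$, so $\deg(p)=0$ and the relevant graphs are disconnected. But in parts (b)--(d) the graphs are typically \emph{connected}: in $\Gamma(S_{p+3})$ one has $p\sim 3$ (since $p+3\leqslant p+3$), in $\Gamma(A_{p+3})$ likewise $p\sim 3$, and $2\sim 3$ links everything else, so there is no small component and no Gruenberg--Kegel hypothesis to invoke ($\Gamma(S_{10})$, for instance, is connected with degree pattern $(3,3,2,2)$). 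The proofs in the cited sources --- and the model the present paper itself follows in Section 3 for $S_{27}$ --- instead bound the solvable radical $R$ via Frattini arguments (showing the largest primes cannot divide $|R|$), deduce that $G/R$ is almost simple, identify its socle by comparing orders against the list of simple groups whose prime divisors do not exceed $l_m$, and then enumerate the admissible central extensions, direct factors and subdirect products. Your outline would stall at the step ``recover the disconnection of $\Gamma(H)$'' for every group appearing in parts (b)--(d), so as written it does not reconstruct the arguments behind the majority of the proposition.
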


In addition, it was shown in \cite[Corollary 1.5]{kogani} that
all alternating groups $A_m$ for which $m\leqslant 100$, except
$A_{10}$, are OD-characterizable.
\begin{proposition}\label{p2}
All alternating groups $A_m$, where $m$ is a natural number less
than $100$, except $A_{10}$, are OD-characterizable.
\end{proposition}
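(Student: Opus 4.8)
The plan is to reduce the claim to a finite verification by invoking Proposition~\ref{proposition1}, and then to dispose of the residual values of $m$ by a direct study of the order and degree pattern of $A_m$. First I would note that parts~(a) and~(c) of Proposition~\ref{proposition1} already handle every $A_m$ for which the block $\{m-3,m-2,m-1,m\}$ contains a prime, the sole exclusion being $(m,p)=(10,7)$, which is exactly why $A_{10}$ must be set aside (it is $2$-fold OD-characterizable by part~(b)). Scanning the primes below $100$ together with the gaps between them, the values of $m<100$ \emph{not} reached in this way are precisely those sitting at the top of a run of at least four consecutive composite integers, namely
$$m\in\{27,28,35,36,51,52,57,58,65,66,77,78,87,88,93,94,95,96\}.$$
So the task is reduced to proving that $A_m$ is OD-characterizable for each of these eighteen values, in agreement with \cite[Corollary~1.5]{kogani}.

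For a fixed such $m$ I would compute $\pi(A_m)$, the prime graph $\Gamma(A_m)$, and the degree pattern $D(A_m)$, using that for odd primes $p,q\le m$ one has $(p\sim q)_{A_m}$ exactly when $p+q\le m$, whereas $(2\sim p)_{A_m}$ holds exactly when $p+4\le m$. Given an arbitrary $H$ with $|H|=|A_m|$ and $D(H)=D(A_m)$, the data $\pi(H)=\pi(A_m)$ and $\deg_H(p)=\deg_{A_m}(p)$ for every prime $p$ are what must be turned into $H\cong A_m$. The standard engine for such a conclusion is the Gruenberg--Kegel structure theorem backed by the classification of finite simple groups: when the prime graph is disconnected one extracts a normal series for $H$ with a single nonabelian simple section whose odd components coincide with those of the target, eliminates the Frobenius and $2$-Frobenius alternatives on order grounds, pins the section down by matching $|A_m|=m!/2$ against the orders of the candidate simple groups, and finally shows the nilpotent padding is trivial.

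The real difficulty is that the eighteen values above were singled out precisely because no prime lies in $\{m-2,m-1,m\}$; consequently $A_m$ has no isolated large prime and $\Gamma(A_m)$ turns out to be \emph{connected} (in fact $3$ is adjacent to every other vertex). The Gruenberg--Kegel dichotomy therefore loses its force, and this is where I expect the main obstacle to lie. In its place one must squeeze the structure of $H$ out of the full degree pattern --- especially the exact degrees of $2$, of $3$, and of the largest primes below $m$ --- in order to bound the composition factors of $H$ and to exclude both the possibility that $H$ is non-simple and the possibility that $H$ is a simple group of order $m!/2$ other than $A_m$. It is here that the degree pattern, rather than the order alone, must resolve any order coincidences among simple groups (the classical instance being $|A_8|=|L_3(4)|$). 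Once, for each of the eighteen $m$, one has forced $H$ to be simple of order $m!/2$ with the prescribed degree pattern, a final appeal to the classification and to the known prime graphs of simple groups gives $H\cong A_m$, completing the argument.
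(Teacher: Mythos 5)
First, a point of comparison: the paper does not prove Proposition \ref{p2} at all --- it is imported verbatim from \cite[Corollary 1.5]{kogani} --- so there is no internal proof to measure your attempt against. Your reduction step is correct and is the right way to organize the problem: parts (a) and (c) of Proposition \ref{proposition1} cover every $m<100$ for which one of $m,m-1,m-2,m-3$ is prime (with $A_{10}$ set aside because $10=7+3$ and $p=7$ is excluded, and then disposed of by part (b)), and your list of eighteen residual values $27,28,35,36,51,52,57,58,65,66,77,78,87,88,93,94,95,96$ is exactly the set of $m<100$ sitting atop a run of at least four consecutive composites.

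For those eighteen values, however, your proposal stops at a statement of intent rather than an argument, and that is where the entire content of the proposition lives. You correctly observe that $\Gamma(A_m)$ is connected for these $m$ (indeed $3\in\Lambda(A_m)$), so the Gruenberg--Kegel machinery you first invoke is unavailable; but what you offer in its place --- ``squeeze the structure of $H$ out of the full degree pattern'' --- is precisely the step that must be proved and is not. The working method (visible in this paper's own treatment of $S_{27}$ in Section 3, which is the template for the case-by-case analysis in \cite{kogani}) is concrete: use the non-adjacency of the largest primes to $2$ and to one another, together with Frattini and Hall-subgroup arguments, to show that the solvable radical $R$ of $H$ avoids the large primes; deduce that $H/R$ is almost simple with socle $S$ all of whose prime divisors lie below $l_m$; identify $S$ by comparing $|S|$ against Zavarnitsine's list \cite{zav} of simple groups with narrow prime spectrum; and finally eliminate nontrivial $R$ using the degree pattern. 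That last step cannot be waved through: the $S_{27}$ computation shows that a nontrivial radical can survive every order constraint and yield $38$ groups, and for $A_m$ one must check in each case that candidates such as $A_{m-1}\times H$ with $|H|=m$ are excluded --- e.g.\ for $m=27$ they are, because $(2\sim 23)_{A_{27}}$ while $2$ and $23$ are non-adjacent in $\Gamma(A_{26}\times H)$, so $\deg(2)$ differs. As it stands, your proposal is a correct framing with an unproved core.
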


These observations convinced us to propose the following
conjecture in \cite{kogani}:

\begin{coj}\label{coj-1} All alternating groups $A_m$, with $m\neq 10$, are
OD-characterizable.
\end{coj}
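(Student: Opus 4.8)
The plan is to attempt a uniform argument over all $m\neq 10$, reducing the characterization to a classification-of-finite-simple-groups (CFSG) case analysis that is controlled by the arithmetic of the primes near $m$. Let $G$ be an arbitrary group with $|G|=|A_m|=m!/2$ and $D(G)=D(A_m)$; the goal is to force $G\cong A_m$. First I would pin down $\Gamma(A_m)$ and $D(A_m)$ explicitly in number-theoretic terms: for distinct odd primes $p,q\leqslant m$ one has $(p\sim q)_{A_m}$ if and only if $p+q\leqslant m$ (realise the element as a disjoint $p$-cycle times $q$-cycle, which is even), while the adjacencies with the vertex $2$ are governed by similar support-counting (an element of order $2p$ exists exactly when a $p$-cycle can be completed to an even permutation within $m$ points).

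The crucial structural feature is the behaviour of the large primes. By Bertrand's postulate there is a prime $p$ with $m/2<p\leqslant m$, and for every such $p$ Legendre's formula gives $p\,\|\,|A_m|$, so a Sylow $p$-subgroup of $G$ is cyclic of order $p$. Moreover the largest primes have very small degree in $\Gamma(A_m)$ (the top prime is typically isolated, since adjacency would demand a prime below $m-p<m/2$), and this is precisely the rigidity to exploit: the matching degree pattern forces the same low-degree and isolated vertices in $\Gamma(G)$.

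The heart of the argument would then be a Gruenberg--Kegel-type reduction. When $\Gamma(G)$ is disconnected, which happens exactly at the isolated large primes, the Gruenberg--Kegel theorem restricts $G$ to being Frobenius, $2$-Frobenius, or an extension possessing a unique nonabelian composition factor $S$ whose prime-graph components are inherited from $G$. Order and degree considerations rule out the solvable Frobenius and $2$-Frobenius cases, after which CFSG, together with the divisibility constraint $|S|\mid m!/2$ and the matching of the isolated-prime components, should force $S\cong A_m$; a final step shows the solvable radical and the outer part are trivial, so that $G\cong A_m$. For the ranges of $m$ in which $\Gamma(A_m)$ is connected, the same endgame is approached through the full degree sequence and the cyclic Sylow structure at the top primes rather than through disconnectedness.

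The main obstacle is twofold, and is exactly why the statement remains a conjecture. On the number-theoretic side, controlling $\Gamma(A_m)$ uniformly in $m$ requires precise information about primes in short intervals (prime gaps near $m$ and near $m/2$), so that the isolated-vertex and degree-sequence arguments go through for every $m$ without exception. On the group-theoretic side, the CFSG case analysis must exclude, for all $m$ simultaneously, every simple group $S$ with $|S|$ dividing $m!/2$ whose degree pattern could accidentally coincide with that of $A_m$ on the relevant vertices; the results of the present paper, which exhibit symmetric groups admitting many non-isomorphic ``OD-companions'', show that such coincidences are not rare and must be treated with care. Finally, the proof must pinpoint $A_{10}$ as the unique failure, which means locating precisely the arithmetic degeneracy in the prime pattern at $m=10$ that breaks the rigidity there, while verifying that no analogous degeneracy occurs for any other $m$.
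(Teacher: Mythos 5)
There is a fundamental problem: the statement you are trying to prove is a conjecture that this paper explicitly \emph{refutes}. The paper contains no proof of Conjecture \ref{coj-1}; instead, Theorem \ref{th-infinite-alt} and Proposition \ref{prop1} show that there are infinitely many alternating groups $A_m$ with $h_{\rm OD}(A_m)>1$ (indeed with no upper bound on $h_{\rm OD}(A_m)$), so no argument along your lines, or any other, can succeed. The counterexamples already start at $m=125$: for $m$ odd with $\triangle(m)=m-l_m>4$ and $\pi(m)\subseteq\pi(\triangle(m)!)$, Zvezdina's theorem (Lemma \ref{alt-n}, case (e)) gives $\Gamma(A_m)=\Gamma(A_{m-1})$ because $m$ and $m-4$ are composite, while $|A_m|=|A_{m-1}|\cdot m$ and every prime divisor of $m$ lies in $\Lambda(A_m)$; hence for \emph{any} group $H$ of order $m$ the group $A_{m-1}\times H$ has the same order and degree pattern as $A_m$, so $h_{\rm OD}(A_m)\geqslant 1+\nu(m)\geqslant 2$ (for instance $h_{\rm OD}(A_{125})\geqslant 6$ and $h_{\rm OD}(A_{625})\geqslant 16$).

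The failure point in your sketch is precisely the ``endgame'' where you claim that CFSG together with the divisibility constraint $|S|\mid m!/2$ and the matching of components ``should force $S\cong A_m$,'' after which ``a final step shows the solvable radical and the outer part are trivial.'' Neither step can go through in general: whenever $m$ is odd and $m$, $m-4$ are both composite, the prime graph (hence the degree pattern) of $A_m$ coincides with that of $A_{m-1}$, so the unique nonabelian composition factor of $G$ may equally well be $A_{m-1}$; and the solvable radical may then be any group of order $m$ without disturbing the degree pattern, provided $\pi(m)\subseteq\Lambda(A_m)$. Your closing paragraph correctly senses that such coincidences ``are not rare and must be treated with care,'' but the right conclusion is stronger: they are unavoidable, the conjecture is false, and the task the paper actually carries out is to quantify how badly it fails rather than to prove it.
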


On the other hand, in recent years we have not found any simple
group $S$ with $h_{\rm OD}(S)\geqslant 3$. Therefore, we asked in
\cite{kogani} the following question:
\begin{problem}\label{prob-1}
Is there a simple group $S$ with $h_{\rm OD}(S)\geqslant 3$?
\end{problem}

Our recent investigations show that Conjecture \ref{coj-1} does
not hold in general. Recently, the authors showed in
\cite{mahmoodifar-sib} that the alternating group $A_{125}$
satisfying $h_{\rm OD}(A_{125})\geqslant 3$ (see also
\cite{mahmoodifar-arxiv}). Here, we will show that there exist
infinite families of alternating groups $A_m$ which are $k$-fold
OD-characterizable with $k\geqslant 3$. We notice that Problem
\ref{prob-1} is also answered positively through these examples.
\begin{theorem}\label{th-infinite-alt}
There are infinitely many alternating groups $A_m$ which satisfy
$h_{\rm OD}(A_m)>1$. In particular, there is no upper bound for
$h_{\rm OD}(A_m)$.
\end{theorem}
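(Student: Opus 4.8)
The plan is to refute Conjecture \ref{coj-1} by an explicit construction: for a suitable infinite set of indices $m$ I shall produce, besides $A_m$ itself, groups $G$ with $|G|=|A_m|$, $D(G)=D(A_m)$ and $G\not\cong A_m$, and I shall arrange matters so that the number of such $G$ is not bounded as $m$ varies. The guiding examples are $A_{10}$ and $A_{125}=A_{5^{3}}$, together with the symmetric companion $S_{27}=S_{3^{3}}$ treated here; this suggests restricting attention to prime-power indices $m=p^{a}$. The first step is to record the combinatorial description of the graph. For odd primes $p,q\in\pi(A_m)$ we have $(p\sim q)_{A_m}$ iff $A_m$ has an element of order $pq$, and for $m$ not too small this is equivalent to $p+q\le m$, since two disjoint cycles of odd prime lengths already form an even permutation, so only the room condition $p+q\le m$ is needed. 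Consequently every prime $p$ with $m/2<p\le m$ is adjacent only to primes $q\le m-p$, so the large primes have small, explicitly computable degree; this yields a closed formula for $D(A_m)$ and pins down exactly which vertices a companion is permitted to rearrange.

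Second, I would build the companions as split extensions, and more generally as subdirect products, assembled so as to hit the order $|A_m|=m!/2$ on the nose. The natural shape is $G=V\rtimes H$ with $V$ an elementary abelian $r$-group and $H$ a direct product of (almost) simple groups whose prime graph is already understood via \cite{k} and \cite{w}; the point of using a nontrivial action rather than a direct product is that a naive candidate such as $A_{p-1}\times C_{p}$ for $m=p$ prime fails, since the central element of order $p$ would become adjacent to every prime dividing $|A_{p-1}|$, whereas $p$ is isolated in $\Gamma(A_p)$. A genuinely twisted action lets one prescribe centralizers: if $H$ acts on $V$ so that elements of the relevant orders have no nonzero fixed vector, the corresponding adjacencies to $r$ are suppressed and the graph of $A_m$ can be reproduced. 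The several non-isomorphic companions then arise from the several inequivalent choices of $H$-module structure on $V$ (equivalently, inequivalent faithful actions of $H$ of the prescribed dimension over $\mathbb{F}_r$) and from the several choices of $H$ of the given order with the required fixed-point behaviour.

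To obtain unboundedness I would let a parameter grow --- the exponent $a$ in $m=p^{a}$, or the number of simple direct factors of $H$ --- and show that the number of inequivalent admissible constructions tends to infinity, so that for every $k$ there is an $m$ with $h_{\rm OD}(A_m)\ge k$; this simultaneously answers Problem \ref{prob-1} in the affirmative. The hard part, and the step on which everything turns, is the verification that each $G$ has exactly the degree pattern $D(A_m)$ rather than merely the same set of prime divisors: one must compute the spectrum $\omega(G)$ closely enough to see that no new adjacency is created and none present in $\Gamma(A_m)$ is lost. In a split extension $V\rtimes H$ the order of an element $(v,h)$ is governed by the order of $h$ together with whether $h$ fixes a nonzero vector of $V$, so the whole argument reduces to controlling fixed points of the chosen action --- delicate because I must preserve every vertex degree at once while keeping the order exactly $m!/2$ and the groups pairwise non-isomorphic. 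I would exploit the slack that $D$ records only degrees and not the edges themselves, so that edges among the small primes may be permuted freely as long as each degree is maintained, and I would appeal to \cite{k} and \cite{w} to exclude accidental coincidences with the prime graphs of other simple groups.
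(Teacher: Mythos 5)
There is a genuine gap: your proposal is a programme rather than a proof, and it misses the single idea that makes the theorem easy. You correctly observe that for $m=p$ prime the naive companion $A_{p-1}\times \mathbb{Z}_p$ fails because the new central element of order $p$ becomes adjacent to everything while $p$ is isolated in $\Gamma(A_p)$; but you draw the wrong conclusion from this. The remedy is not to pass to twisted split extensions $V\rtimes H$ with fixed-point-free actions --- it is to choose $m$ so that the obstruction disappears, i.e.\ so that every prime in $\pi(m)$ is \emph{already} joined to all other vertices of $\Gamma(A_m)$. The paper does exactly this: if $m$ is odd, $\triangle(m)=m-l_m>4$ and $\pi(m)\subseteq\pi(\triangle(m)!)$, then (i) $\pi(m)\subseteq\Lambda(A_m)$ by \cite[Lemma 2.17]{kogani}, and (ii) $\Gamma(A_m)=\Gamma(A_{m-1})$ by Zvezdina's theorem (Lemma \ref{alt-n}(e), since $m$ and $m-4$ are composite). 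Hence the plain direct products $A_{m-1}\times H$, with $H$ \emph{any} group of order $m$, have the same order and degree pattern as $A_m$, giving $h_{\rm OD}(A_m)\geqslant 1+\nu(m)$ with no action to construct and no spectrum computation to perform. Unboundedness then comes for free from $m=3^n$: $\nu(3^n)\geqslant|{\rm Par}(n)|\to\infty$, and Lemma \ref{newlemmaprime} supplies infinitely many $n$ with $\triangle(3^n)>4$.

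By contrast, your route leaves every hard step unexecuted. You never exhibit one concrete companion and verify its degree pattern; you give no existence argument for a faithful $H$-module $V$ whose order makes $|V|\cdot|H|$ equal to $m!/2$ exactly while simultaneously suppressing precisely the unwanted adjacencies and creating no new ones (these constraints are severe and there is no reason offered that they can be met); and you have no analogue of the number-theoretic input (Lemma \ref{newlemmaprime}) guaranteeing infinitely many admissible $m$. The ``slack'' you invoke --- that $D$ records degrees rather than edges --- is not used by the paper and would only complicate the verification. As written, the proposal does not establish the theorem.
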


It is also worth mentioning that a similar description as
Proposition \ref{p2} is exhibited about OD-characterizability of
symmetric groups $S_m$, where $m$ is a natural number less than
$100$ (see \cite[Theorem 1.7]{kogani}). Nevertheless, in checking
the list of such groups, we found out that it contains a mistake
(in fact, Proposition 4.1 in \cite{kogani} asserts erroneously
that the symmetric group ${S}_{27}$ is $3$-fold
OD-charaterizable). Therefore, another result of the present
article can be stated as follows:
\begin{theorem}\label{th12} The symmetric group $S_{27}$ is $38$-fold
OD-characterizable.
\end{theorem}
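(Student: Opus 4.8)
The plan is to follow the two-stage strategy that is standard for results of this type: first fix the arithmetic invariants attached to $S_{27}$, then classify all groups that share them. I would begin by recording $|S_{27}|=27!=2^{23}\cdot 3^{13}\cdot 5^{6}\cdot 7^{3}\cdot 11^{2}\cdot 13^{2}\cdot 17\cdot 19\cdot 23$, so that $\pi(S_{27})=\{2,3,5,7,11,13,17,19,23\}$ consists of nine primes. To read off $D(S_{27})$ I would use the elementary description of $\Gamma(S_n)$: for odd primes $p,q$ one has $(p\sim q)_{S_{27}}$ iff $p+q\le 27$, while $(2\sim q)_{S_{27}}$ iff $q\le 25$. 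Applying this to each vertex gives
\[
D(S_{27})=(8,8,7,7,5,5,4,4,2),
\]
with $2$ and $3$ adjacent to every other prime, so that $\Lambda(S_{27})=\{2,3\}$. I would also record that $\{17,19,23\}$ is an independent set in $\Gamma(S_{27})$, since $17+19,\ 17+23,\ 19+23$ all exceed $27$; this coclique of size three is used repeatedly below.

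Next let $G$ be an arbitrary group with $|G|=27!$ and $D(G)=D(S_{27})$; the goal is to determine $G$ up to isomorphism. First I would extract from the degree sequence the location of the two complete vertices and of the low-degree vertex $23$ (whose degree is only $2$). Because $\{17,19,23\}$ is an independent set of size three and no solvable group admits three pairwise non-adjacent primes, $G$ is nonsolvable and therefore has a nonabelian composition factor. I would then invoke the classification of finite simple groups, together with the divisibility $|S|\mid 27!$ and the fact that $23$ divides $|G|$ exactly once, to produce a finite list of admissible nonabelian simple sections $S$ — the relevant alternating groups $A_m$ with $m$ near $27$ and the few remaining simple groups whose order divides $27!$ — distinguishing the cases according to whether $23$ lies in a nonabelian section or in the solvable part of $G$.

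For each admissible $S$ I would analyse the possible normal structure of $G$: writing the solvable frame (the solvable radical and the solvable quotient) whose order is constrained by $|G|/|S|$, I would use the degree pattern — in particular the precise neighbourhoods of $17$, $19$ and $23$ — to decide which frames and which actions are compatible. Each compatible configuration is realised either as a split extension or as a subdirect (fibre) product built from $S$ and the frame, and conversely every such $G$ arises in this way. Assembling the list over all admissible $S$ and discarding isomorphic duplicates should yield exactly $38$ groups, among them $S_{27}$ itself and $A_{27}\times C_2$ (the latter has the same order and, since $2$ is already a complete vertex, the same prime graph as $S_{27}$).

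The hard part will be the enumeration and its completeness, not the preliminary computation. On the existence side one must exhibit each of the $38$ groups and verify directly that its degree pattern equals $(8,8,7,7,5,5,4,4,2)$; the adjacencies of the large primes $17,19,23$ are delicate, and it is precisely here that the earlier count in \cite{kogani} erred by a wide margin. On the uniqueness side, the CFSG-based elimination of the near-miss simple sections, together with the cohomological and automorphism bookkeeping needed to avoid omitting or double-counting extensions, is the genuine obstacle; controlling the interaction between the nonabelian section and its solvable frame so that no spurious edge is created or destroyed in $\Gamma(G)$ is what ultimately makes the tally $38$ rather than a smaller number.
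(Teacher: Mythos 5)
Your preliminary computations (the order, the degree pattern $(8,8,7,7,5,5,4,4,2)$, the coclique $\{17,19,23\}$, and $\Lambda(S_{27})=\{2,3\}$) are correct, and your overall two-stage strategy is the same as the paper's. But the proposal stops exactly where the proof begins: everything that actually produces the number $38$ is deferred as ``the hard part'' and never carried out, so there is a genuine gap. Concretely, you never determine the admissible socles. The paper first shows, via Frattini arguments applied to an element of order $23$ and to Sylow $q$-subgroups for $q\in\{13,17,19\}$, that the solvable radical $R$ is a $\{2,3,5,7,11\}$-group; then it rules out a socle that is a product of two or more simple factors (using $|{\rm Out}(P^{n})|=|{\rm Out}(P)|^{n}\cdot n!$ to show $23\nmid|{\rm Out}(S)|$ would force $2^{46}\mid |G|$); and only then does comparison with the simple groups whose spectrum lies in $\{2,\dots,23\}$ pin down $S\cong A_{26}$ or $A_{27}$, hence $|R|\in\{1,2,27,54\}$. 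Your appeal to ``$23$ divides $|G|$ exactly once'' and a generic CFSG list does not substitute for these steps, and without the bound on $\pi(R)$ you cannot even compute $|R|$.

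The second and larger omission is the enumeration itself. The count $38=3+20+15$ comes from: (i) $S\cong A_{27}$ giving $S_{27}$, $\mathbb{Z}_2\times A_{27}$, $\mathbb{Z}_2\cdot A_{27}$; (ii) $G/R\cong A_{26}$ with $|R|=54$, where a centralizer/Schur-multiplier argument (the multiplier of $A_{26}$ has order $2$) forces $G=R\times A_{26}$ for one of the $15$ groups $R$ of order $54$, or $G=Q\times(\mathbb{Z}_2\cdot A_{26})$ for one of the $5$ groups $Q$ of order $27$; (iii) $G/R\cong S_{26}$ with $|R|=27$, where one shows $R$ is centralized by the index-$2$ subgroup, splits off a normal $A_{26}$, and realizes $G$ as a fibre product of $S_{26}$ with a group of order $54$, giving $15$ more groups. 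None of this bookkeeping --- the specific group counts $\nu(54)=15$, $\nu(27)=5$, the transitivity argument on ${\rm Aut}(R)$, or the subdirect-product classification --- appears in your proposal, and it cannot be recovered from the degree-pattern considerations you describe. As written, the argument establishes at most that $G$ is a nonsolvable group of order $27!$ with prescribed prime graph; it does not prove the theorem.
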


Now, we give a revised list of symmetric groups in question.
\begin{corollary}\label{th1111} All symmetric groups $S_m$,
where $m$ is a natural number less than $100$, except $m=10, 27$,
are OD-characterizable or $3$-fold OD-characterizable. Moreover,
the symmetric group $S_{10}$ is $8$-fold OD-characterizable,
while the symmetric group $S_{27}$ is $38$-fold
OD-characterizable.
\end{corollary}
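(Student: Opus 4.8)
My plan is to obtain Corollary \ref{th1111} by assembling three ingredients already at our disposal: the small-degree classification of symmetric groups from \cite[Theorem 1.7]{kogani}, the value $h_{\rm OD}(S_{10})=8$ from Proposition \ref{proposition1}(b), and the value $h_{\rm OD}(S_{27})=38$ from Theorem \ref{th12}. The first of these plays the role for symmetric groups that Proposition \ref{p2} plays for alternating groups: it asserts that every $S_m$ with $m<100$ is OD-characterizable or 3-fold OD-characterizable, apart from the two distinguished degrees $m=10$ and $m=27$. Accordingly I split the degrees $m<100$ into the generic range $m\notin\{10,27\}$ and the two exceptional values, and treat each separately.

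For the generic range I quote \cite[Theorem 1.7]{kogani} directly. Its infinite-family portion is in fact recovered by Proposition \ref{proposition1}: part (a) settles the degrees with $m$ or $m-1$ prime, giving $S_p$ and $S_{p+1}$ OD-characterizable, while part (d) settles the degrees with $m-3$ prime and $m\neq 10$, giving $S_{p+3}$ OD-characterizable or 3-fold OD-characterizable. The degrees $m<100$ not of one of these forms, namely those for which none of $m$, $m-1$, $m-3$ is prime, were disposed of case by case in \cite{kogani}, and the dichotomy ``OD-characterizable or 3-fold OD-characterizable'' holds for each of them. For the first exceptional value I invoke Proposition \ref{proposition1}(b), which gives $h_{\rm OD}(S_{10})=8$, so $S_{10}$ is 8-fold OD-characterizable.

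It remains to correct the second exceptional value. The list of \cite{kogani} contains a single error, namely the assertion of \cite[Proposition 4.1]{kogani} that $h_{\rm OD}(S_{27})=3$; I replace this by the correct value $h_{\rm OD}(S_{27})=38$ established in Theorem \ref{th12}, so that $S_{27}$ is 38-fold OD-characterizable. Substituting this one corrected entry into the otherwise unchanged classification yields the revised list in the statement. The main obstacle I anticipate is not the assembly itself but the need to be certain that $m=27$ is the only degree requiring amendment: one must confirm that for every other $m<100$ the number of groups sharing the order $|S_m|$ and the degree pattern ${\rm D}(S_m)$ agrees with the value recorded in \cite[Theorem 1.7]{kogani}. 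I would verify this by re-examining the degree patterns ${\rm D}(S_m)$ underlying that theorem together with the associated group enumerations; in practice the delicate computation is exactly the one carried out for $S_{27}$ in Theorem \ref{th12}, so once that single correction is in place every remaining entry stands unchanged and the corollary follows.
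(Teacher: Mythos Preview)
Your proposal is correct and matches the paper's approach: the corollary is stated in the introduction without a separate proof, as it is simply the revised version of \cite[Theorem 1.7]{kogani} obtained by replacing the erroneous entry for $m=27$ (coming from \cite[Proposition 4.1]{kogani}) with the value $h_{\rm OD}(S_{27})=38$ established in Theorem~\ref{th12}, together with the known value $h_{\rm OD}(S_{10})=8$ from Proposition~\ref{proposition1}(b). Your caveat about verifying that $m=27$ is the only degree requiring correction is well placed; the paper asserts this explicitly in the paragraph preceding Theorem~\ref{th12} rather than re-deriving the remaining cases.
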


The following conjecture involving symmetric groups is posed in
\cite{kogani}:

\begin{coj}\label{coj-2} All symmetric groups $S_m$, with $m\neq 10$, are
OD-characterizable or $3$-fold OD-characterizable.
\end{coj}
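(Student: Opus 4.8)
The plan is to prove the conjecture by a uniform analysis driven by the arithmetic of $m$: for each $m$ compute the degree pattern ${\rm D}(S_m)$, and then classify up to isomorphism all groups $G$ with $|G|=m!$ and ${\rm D}(G)={\rm D}(S_m)$. First I would record $\Gamma(S_m)$. The crucial observation is that for a prime $p$ with $m/2<p\leqslant m$ any element of order divisible by $p$ must contain a single $p$-cycle, so $(p\sim q)_{S_m}$ holds exactly when $q\leqslant m-p$; in particular $2$ is adjacent to every prime $q\leqslant m-2$, so $\Gamma(S_m)$ is connected except when $m$ or $m-1$ is prime, where the top prime is isolated. Those near-prime cases, namely $m\in\{p,p+1\}$ and $m=p+3$, are already covered by Proposition \ref{proposition1}, so the substance is the remaining $m$. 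In every case the neighbourhoods of the large primes, and hence the corresponding coordinates of ${\rm D}(S_m)$, are completely determined and small; this is the lever.

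Next, for a candidate $G$ sharing order and degree pattern I would transfer these forced coordinates to $\Gamma(G)$. The small, rigidly prescribed degrees of the large primes, combined with $|G|=m!$, should force $G$ to possess a unique nonabelian composition factor whose order is divisible by all large primes. In the disconnected near-prime subcases this is immediate from the Gruenberg--Kegel theorem, which restricts $G$ to a Frobenius group, a $2$-Frobenius group, or a group with a normal series $1\trianglelefteq N\trianglelefteq M\trianglelefteq G$ in which $M/N$ is nonabelian simple and $G/M\hookrightarrow{\rm Out}(M/N)$; counting the prime divisors of $m!$ against the component data of \cite{k,w} eliminates the solvable possibilities and yields $M/N=A_m$, $N=1$. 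In the connected generic range the structure theorem is unavailable, and one must instead argue directly from ${\rm D}(S_m)$ and $|G|=m!$, as in the proof of Proposition \ref{proposition1}(c),(d), that $A_m$ is still the unique nonabelian composition factor of $G$.

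Either way $A_m$ is normal of index $[S_m:A_m]=2$ in $G$, so $G$ is one of the three canonical extensions of $A_m$ by a group of order $2$: the symmetric group $S_m=A_m{:}2$, the direct product $A_m\times C_2$, and, when the Schur multiplier permits, the double cover $2{\cdot}A_m$. It is precisely these three candidates that make the bound $3$ plausible: computing ${\rm D}$ for each and comparing with ${\rm D}(S_m)$ decides which contribute, so that $h_{\rm OD}(S_m)\in\{1,2,3\}$, reproducing the OD-characterizability statements of Proposition \ref{proposition1}(a),(d). The enumeration itself is routine, governed by the action on the Schur multiplier and the single outer automorphism of $A_m$, together with the way the prime $2$ attaches to the smaller primes.

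The genuinely hard and non-uniform step is excluding every \emph{other} group of order $m!$ that accidentally shares ${\rm D}(S_m)$ but does not have $A_m$ as its socle. Such parasitic groups arise from coincidences in the factorization of $m!$: a different simple group, a product of smaller simple groups, or a split extension with a large solvable kernel may realize the same order and an indistinguishable degree sequence, because ${\rm D}(S_m)$ leaves many adjacencies among the small primes undetermined. Bounding the number of these parasites by zero for every $m\neq 10$ is exactly where the conjecture is delicate, and controlling the undetermined adjacencies prime-by-prime, rather than enumerating the three canonical extensions, is the crux I expect to resist a uniform treatment. Indeed the mechanism producing $h_{\rm OD}(S_{10})=8$ is the same one that, by Theorem \ref{th12}, yields $h_{\rm OD}(S_{27})=38$, so a complete proof must reduce to showing that outside a \emph{sharply delimited} exceptional set the order and degree pattern of $S_m$ admit no composition structure beyond $A_m$.
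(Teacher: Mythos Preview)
The statement you are trying to prove is a \emph{conjecture that the paper refutes}, not one it establishes. Immediately after stating Conjecture~\ref{coj-2} the paper writes that ``a negative answer to this conjecture is provided by symmetric group $S_{27}$'': Theorem~\ref{th12} shows $h_{\rm OD}(S_{27})=38$, and Theorem~\ref{th-infinite-sym} together with Proposition~\ref{prop2} produces infinitely many $m$ with $h_{\rm OD}(S_m)>3$. So there is no ``paper's own proof'' to compare against; the paper's content is a disproof.

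Your outline therefore has a genuine, fatal gap, and it occurs exactly where you assert that the forced degree data and $|G|=m!$ yield $A_m$ as the unique nonabelian composition factor, hence $A_m\trianglelefteq G$ with index~$2$. The paper's analysis of $S_{27}$ (Lemma~\ref{final}) shows precisely why this fails: under the hypotheses $|G|=|S_{27}|$ and $\Gamma(G)=\Gamma(S_{27})$ the socle of $G/R$ may be $A_{26}$ rather than $A_{27}$, because $\Gamma(S_{27})=\Gamma(S_{26})=\Gamma(A_{27})=\Gamma(A_{26})$. In that branch the solvable radical $R$ has order $27$ or $54$, and the classification of extensions produces $35$ further groups (direct products $R\times A_{26}$, products with $2\cdot A_{26}$, and fibre products over $\mathbb{Z}_2$), not just the three canonical extensions of $A_{27}$. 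The same mechanism underlies Proposition~\ref{prop2}: whenever $m$ is odd with $\triangle(m)>4$ and $\pi(m)\subseteq\pi(\triangle(m)!)$, the groups $S_{m-1}\times H$ and $A_{m-1}\times K$ for arbitrary $H$ of order $m$ and $K$ of order $2m$ already share order and degree pattern with $S_m$. Your final paragraph notices the $S_{27}$ obstruction but then proposes to salvage the argument by allowing a ``sharply delimited'' exceptional set; that is no longer a proof of Conjecture~\ref{coj-2} as stated, and the paper shows the exceptional set is in fact infinite.
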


It turns out that a negative answer to this conjecture is
provided by symmetric group $S_{27}$ (see also
\cite{mahmoodifar-sib, mahmoodifar-arxiv}). In addition, we will
get many other examples of symmetric groups which are $k$-fold
OD-characterizable with $k>3$.

\begin{theorem}\label{th-infinite-sym}
There are infinitely many symmetric groups $S_m$ which satisfy
$h_{\rm OD}(S_m)>3$. In particular, there is no upper bound for
$h_{\rm OD}(S_m)$.
\end{theorem}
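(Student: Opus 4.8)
The plan is to reduce the problem to the alternating case already settled in Theorem~\ref{th-infinite-alt}, and then to construct an order-preserving, isomorphism-injective passage from groups sharing the invariants of $A_m$ to groups sharing those of $S_m$. First I would record the structural rule governing the prime graph of a symmetric group: for distinct $p,q\in\pi(S_m)$ one has $(p\sim q)_{S_m}$ if and only if $p+q\leqslant m$, since the smallest element of order $pq$ is a disjoint product of a $p$-cycle and a $q$-cycle. From this the whole degree pattern ${\rm D}(S_m)$ is read off, and I would isolate the large primes $p$ with $m/2<p\leqslant m$, for which $p\,\|\,m!$ and the Sylow $p$-subgroup is the cyclic group $C_p$. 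The key comparison is that $\Gamma(A_m)$ and $\Gamma(S_m)$ agree on all adjacencies between odd primes, because a disjoint product of a $p$-cycle and a $q$-cycle with $p,q$ odd is an even permutation; consequently the two degree patterns ${\rm D}(A_m)$ and ${\rm D}(S_m)$ differ only in the entries attached to the prime $2$ and to its neighbours.

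Next, to each group $G$ with $|G|=|A_m|=m!/2$ and ${\rm D}(G)={\rm D}(A_m)$ furnished by Theorem~\ref{th-infinite-alt}, I would associate a group $\widehat G$ of order $m!$, an extension $G.2$ realized as a split extension $G\rtimes C_2$ or as a subdirect product, modelled on the inclusion $A_m\leqslant S_m$. The role of the index-two over-structure is to introduce exactly the ``transposition'' adjacencies, making the vertex $2$ adjacent precisely to the primes $q\leqslant m-2$, as required for $S_m$; choosing the extension carefully (rather than the trivial one) is what prevents the spurious edges at $2$, since the naive direct product $G\times C_2$ would force $2$ to be joined to every vertex and hence spoil the entries at the largest primes. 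I would then verify that ${\rm D}(\widehat G)={\rm D}(S_m)$ by checking, prime by prime, that no adjacency among odd primes is altered and that the degree of $2$ is corrected to its $S_m$ value, so that all remaining entries of the pattern are preserved.

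Finally I would show that the assignment $G\mapsto\widehat G$ is injective on isomorphism classes, so that distinct witnesses for $A_m$ yield distinct witnesses for $S_m$; this gives $h_{\rm OD}(S_m)\geqslant h_{\rm OD}(A_m)$ for every $m$ in the infinite family supplied by Theorem~\ref{th-infinite-alt}. Since that theorem asserts that $h_{\rm OD}(A_m)$ is unbounded, the same follows for $h_{\rm OD}(S_m)$, and in particular $h_{\rm OD}(S_m)>3$ for all but finitely many members of the family, with the value $h_{\rm OD}(S_{27})=38$ of Theorem~\ref{th12} serving as a concrete anchor. The main obstacle is the exact control of the spectrum of the extension $\widehat G$: one must prove that passing to $\widehat G$ neither creates nor destroys a single edge at any vertex, the delicate point being the behaviour at the prime $2$ and at the large primes $p\in(m/2,m]$ whose low degrees leave no margin for error, while simultaneously retaining enough of the structure of $G$ inside $\widehat G$ to guarantee the injectivity on isomorphism types. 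Once this spectral bookkeeping is in place, the unboundedness of $h_{\rm OD}(S_m)$ is immediate.
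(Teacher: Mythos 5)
Your reduction to Theorem~\ref{th-infinite-alt} has a genuine gap at its central step: the passage $G\mapsto\widehat G$ is never actually constructed. Given an \emph{arbitrary} group $G$ with $|G|=m!/2$ and ${\rm D}(G)={\rm D}(A_m)$, there is no canonical index-two over-group ``$G.2$'': such extensions need not exist with controlled spectrum, are far from unique when they do exist (already for $G=A_{m-1}\times H$ one has the candidates $S_{m-1}\times H$, $({\mathbb Z}_2\times A_{m-1})\times H$, $({\mathbb Z}_2\cdot A_{m-1})\times H$, and $A_{m-1}\times K$ with $|K|=2m$), and the injectivity of the assignment on isomorphism classes is asserted rather than proved. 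You acknowledge that ``the exact control of the spectrum of the extension'' is the main obstacle, but that obstacle is precisely the content of the theorem; since the full set of groups realizing $(|A_m|,{\rm D}(A_m))$ is not classified (only bounded below), you cannot even restrict to witnesses of a known shape without collapsing back to an explicit construction.

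Moreover, your worry that the direct product would ``spoil the entries at the largest primes'' points the argument in the wrong direction. The theorem is proved only for $m$ odd with $\triangle(m)>4$ and $\pi(m)\subseteq\pi(\triangle(m)!)$, and in that regime the vertex $2$ already lies in $\Lambda(S_m)$ and $\Gamma(S_m)=\Gamma(S_{m-1})=\Gamma(A_m)=\Gamma(A_{m-1})$; consequently ${\mathbb Z}_2\times A_m$ is a perfectly good witness, and indeed one of the ones the paper uses. The paper's proof (Proposition~\ref{prop2} combined with Lemma~\ref{newlemmaprime}) sidesteps all of your spectral bookkeeping by writing down explicit pairwise non-isomorphic groups of order $m!$ with the required degree pattern, namely $S_m$, ${\mathbb Z}_2\times A_m$, ${\mathbb Z}_2\cdot A_m$, $S_{m-1}\times H$, $({\mathbb Z}_2\times A_{m-1})\times H$, $({\mathbb Z}_2\cdot A_{m-1})\times H$ and $A_{m-1}\times K$, with unboundedness coming from the number of choices of $H$ of order $m=p^n$. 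To repair your argument you would need either to exhibit $\widehat G$ concretely for each witness $G$ and prove injectivity, or abandon the reduction in favour of such a direct list. The appeal to $h_{\rm OD}(S_{27})=38$ is also inert here, since $\triangle(27)=4$ and $S_{27}$ does not belong to the family the theorem produces.
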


We conclude the introduction with some further notation and
definitions. Given a natural number $m$, we denote by $l_m$ the
largest prime less than or equal to $m$ and we let
$\triangle(m)=m-l_m$. It is clear that $l_m=m$ (or equivalently
$\triangle(m)=0$) if and only if $m$ is a prime number. Note that
if $m>2$, then $l_m$ is always an odd prime, and so
$\triangle(m)$ is even iff $m$ is odd. In addition, from the
definition, it is easy to see that
$$l_m=l_{m-1}=l_{m-2}=\ldots=l_{m-\triangle(m)+1}.$$
We shall use the notation $\nu(m)$ (resp. $\nu_a(n)$) to denote
the number of types of groups (resp. abelian groups) of order $m$.
Clearly, $\nu_a(m)\leqslant \nu(m)$. We also denote the set of
partitions of $m$ by ${\rm Par}(m)$. It is known that for any
prime $p$, $\nu_a(p^m)=|{\rm Par}(m)|$. Finally, we use
$A_m$ and $S_m$ to denote the alternating and
symmetric group on $m$ letters, respectively. In the case when
$p\geqslant 5$ is a prime, we denote by $\mathcal{S}_{p}$ the set
of all simple groups with prime divisors at most $p$.
\section{Auxiliary results} In this section we give several definitions and auxiliary
results to be used later. The first of them is the following
definition of {\em subdirect products}.
\begin{definition}
Let $n\geqslant 2$. A subdirect product of the groups $G_1, G_2,
\ldots, G_n$ is a subgroup $G\leqslant G_1\times G_2\times\cdots
\times G_n$ of the direct product such that the canonical
projections $G\rightarrow G_i$ are surjective for all $i$.
\end{definition}

One way of obtaining a subdirect product of {\em two} groups is
via the fibre product construction. This is illustrated here for
two groups. Given some groups $G_1$ and $G_2$ with normal
subgroups $N_1$ and $N_2$ such that $G_1/N_1$ and $G_2/N_2$ are
isomorphic, we want to construct a group $G$ having a normal
subgroup $N$ isomorphic to $N_1\times N_2$ such that $G/N_2$ is
isomorphic to $G_1$ and $G/N_1$ is isomorphic to $G_2$. Notice
that we will usually identify $N_1\times 1$ with $N_1$ and
$1\times N_2$ with $N_2$. To carry out the construction, let
$\pi_1$ and $\pi_2$ be homomorphisms from $G_1$ and $G_2$ onto
some group $Q$. Now let
$$G:=\left\{(g_1, g_2)\in G_1\times G_2 \ | \ \pi_1(g_1)=\pi_2(g_2)\right\}.$$
It is easy to check that $G$ constitutes a subgroup of $G_1\times
G_2$, and the projection maps onto the coordinates map $G$ onto
$G_1$ and $G_2$, respectively. We call $G$ the {\em fibre product}
associated with $\pi_1$ and $\pi_2$ (Remak \cite{remak} called it
the {\em meromorphic product} of $G_1$ and $G_2$ with normal
subgroups $N_1$ and $N_2$.). Also, $N_1\times N_2$ is a normal
subgroup of $G$ and the map $\pi$ on $G$ defined by $\pi((g_1,
g_2))=\pi_1(g_1)$ maps $G$ onto $Q$ with kernel $N_1\times  N_2$,
so $G/(N_1\times N_2)$ is isomorphic to $Q$. In fact, we have
$$G_1/N_1\cong G/(N_1\times N_2)\cong G_2/N_2.$$
It is a basic observation that every subdirect product of $G_1$
and $G_2$ is a fibre product (or a meromorphic product) of these
groups.

Information on the adjacency of vertices in the prime graphs
associated with alternating and symmetric groups can be found in
\cite{vas-vdo, zav-maz}. Consider the function $S: \mathbb{N}
\rightarrow  \mathbb{N}$, defined as follows: $S(1) = 1$ and for
$m>1$ with prime factorization $m =\prod_{i}p_i^{\alpha_i}$,
$S(m)=\sum_{i} p_i^{\alpha_i}$. Then one has \cite[Lemma 4
]{zav-maz}:
\begin{lm}\label{adj-SA} Let $m$ and $n$ be ntaural numbers. Then there hold:
\begin{itemize}
\item[{\rm (1)}] $A_n$ has an element of order $m$ if and only if $S(m)\leqslant
n$ for odd $m$ and $S(m)\leqslant n-2$ for even $m$.
\item[{\rm (2)}] $S_n$ has an element of order $m$ if and only if
$S(m)\leqslant n$.
\end{itemize}
\end{lm}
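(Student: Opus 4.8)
The plan is to translate both statements into a purely combinatorial question about cycle types. The order of a permutation $\sigma$ equals the least common multiple of its cycle lengths, while $\sigma$ is an even permutation if and only if the number of its even-length cycles is even (a cycle of length $\ell$ has sign $(-1)^{\ell-1}$). Thus, writing $m=\prod_{i=1}^{k}p_i^{\alpha_i}$, part (2) amounts to showing that the least number of points moved by a permutation whose cycle lengths have least common multiple $m$ is exactly $S(m)=\sum_i p_i^{\alpha_i}$, and part (1) is the same question restricted to even permutations.

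For the \emph{upper bounds} I would exhibit explicit elements. A permutation consisting of disjoint cycles of the pairwise coprime lengths $p_1^{\alpha_1},\dots,p_k^{\alpha_k}$ moves exactly $S(m)$ points and has order ${\rm lcm}(p_1^{\alpha_1},\dots,p_k^{\alpha_k})=m$; this settles the ``if'' direction of (2). When $m$ is odd every $p_i^{\alpha_i}$ is odd, so each such cycle is itself an even permutation and the element already lies in $A_n$, giving the bound $S(m)$ in the odd case of (1). When $m$ is even exactly one cycle (the one of length $2^{\alpha_1}$) is even, so the element is odd; I would repair the parity by adjoining one extra transposition, producing an even permutation of the same order $m$ that moves $S(m)+2$ points, which yields the ``if'' direction in the even case of (1).

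For the \emph{lower bound} in (2), suppose $\sigma$ has order $m$ with cycle lengths $\ell_1,\dots,\ell_r$ (each dividing $m$). For each $i$ choose a cycle whose length is divisible by the full prime power $p_i^{\alpha_i}$; this defines a partition of $\{1,\dots,k\}$ into blocks $I_j$, where $I_j$ collects the primes assigned to cycle $j$. Then $\ell_j\ge\prod_{i\in I_j}p_i^{\alpha_i}\ge\sum_{i\in I_j}p_i^{\alpha_i}$, the second inequality holding because a product of pairwise coprime integers that are each at least $2$ dominates their sum. Summing over $j$ gives $\sum_j\ell_j\ge S(m)$, as required, and the odd case of (1) follows at once since $A_n\le S_n$.

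The delicate part, and the step I expect to be the main obstacle, is the lower bound in the even case of (1): an \emph{even} permutation of order $m$ (with $m$ even) must move at least $S(m)+2$ points. Here I would track the ``excess'' $\sum_j\ell_j-S(m)\ge 0$ produced above and determine exactly when it is at most $1$. A short case analysis of the inequalities in the previous paragraph shows that excess $0$ forces the cycle lengths to be precisely $p_1^{\alpha_1},\dots,p_k^{\alpha_k}$, while excess $1$ can occur only when $2$ and $3$ divide $m$ to the first power and forces the lengths $\{6\}\cup\{p_i^{\alpha_i}:p_i\ge 5\}$; in either configuration there is exactly one even-length cycle, so the permutation is necessarily odd. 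Consequently an even permutation has excess $\ge 2$, which matches the construction and completes the proof that $A_n$ contains an element of order $m$ if and only if $S(m)\le n-2$ for even $m$.
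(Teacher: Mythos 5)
Your argument is correct, and in fact it is more than the paper offers: the paper does not prove this lemma at all, but simply quotes it as Lemma~4 of Zavarnitsin--Mazurov (reference \cite{zav-maz}), so there is no in-paper proof to compare against. Your reduction to cycle types, the explicit constructions for the upper bounds, and the ``assign each prime power $p_i^{\alpha_i}$ to a cycle it divides'' argument for the lower bound are exactly the standard (and essentially the only reasonable) route, and the delicate step you isolate --- that an even permutation of even order $m$ must move at least $S(m)+2$ points --- is handled correctly by your excess analysis. Two small points are worth writing out explicitly when you finish the case analysis: first, any nontrivial cycle that receives no prime in the assignment contributes its full length, hence at least $2$, to the excess, so excess at most $1$ already forces every nontrivial cycle to be assigned; second, since $\ell_j$ is divisible by the pairwise coprime prime powers indexed by $I_j$, either $\ell_j=\prod_{i\in I_j}p_i^{\alpha_i}$ or $\ell_j\geqslant 2\prod_{i\in I_j}p_i^{\alpha_i}$, and the latter already gives excess at least $2$; with these two observations the only excess-$1$ configuration is the one you name (a single $6$-cycle absorbing $2\,\|\,m$ and $3\,\|\,m$, plus cycles of odd prime-power length), which indeed has exactly one even-length cycle and is therefore odd. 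Minor edge cases ($m=1$, and $m=2$ where the construction is a double transposition) are immediate.
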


The next two corollaries are the adjacency criteria for two
vertices of $\Gamma({A}_{n})$ and $\Gamma(S_{n})$, respectively.
\begin{corollary} \label{c-alternating}
Let $p, q\in \pi(A_n)\setminus \{2\}$. Then $(p\sim q)_{A_n}$ if
and only if $p+q\leqslant n$, while $(p\sim 2)_{A_n}$ if and only
if $p+2\leqslant n-2$.
\end{corollary}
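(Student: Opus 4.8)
The plan is to derive this corollary directly from Lemma \ref{adj-SA}(1), since by definition the adjacency of two vertices $p$ and $q$ in $\Gamma(A_n)$ is equivalent to the existence of an element of order $pq$ in $A_n$, i.e.\ to $pq\in\omega(A_n)$. Thus for distinct primes $p,q\in\pi(A_n)$ I would set $m=pq$ and simply ask when $m\in\omega(A_n)$, reading the answer off the criterion $S(m)\leqslant n$ for odd $m$ and $S(m)\leqslant n-2$ for even $m$.

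First I would handle the case $p,q\neq 2$. Here $m=pq$ is a product of two \emph{distinct} odd primes, so its prime factorization has both exponents equal to $1$; hence $S(pq)=p+q$, and $pq$ is odd. Applying the odd branch of Lemma \ref{adj-SA}(1) then yields $pq\in\omega(A_n)$ if and only if $S(pq)=p+q\leqslant n$, which is exactly the first assertion $(p\sim q)_{A_n}\iff p+q\leqslant n$.

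Next I would treat the adjacency of an odd prime $p$ with the prime $2$. Now $m=2p$ is even, with factorization $2\cdot p$, so that $S(2p)=2+p=p+2$. Applying the even branch of Lemma \ref{adj-SA}(1) gives $2p\in\omega(A_n)$ if and only if $S(2p)=p+2\leqslant n-2$, which is precisely the second assertion $(p\sim 2)_{A_n}\iff p+2\leqslant n-2$.

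Since both steps are immediate substitutions into the lemma, I expect no genuine obstacle; the only points requiring a moment's care are purely in the bookkeeping of the function $S$. Specifically, distinctness of $p$ and $q$ is what forces each prime to occur with exponent $1$, so that $S(pq)$ reduces to $p+q$ rather than involving a genuine prime power; and the shift from the bound $n$ to the bound $n-2$ in the even case is exactly the feature of Lemma \ref{adj-SA}(1) that separates the criterion for $2\sim p$ from the one for two odd vertices.
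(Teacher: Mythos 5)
Your proof is correct and is exactly the derivation the paper intends: the corollary is stated as an immediate consequence of Lemma \ref{adj-SA}(1), obtained by setting $m=pq$, noting $S(pq)=p+q$ for distinct primes, and reading off the odd/even branches of the criterion. The paper gives no explicit proof, and your substitution argument fills in precisely the implicit steps.
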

\begin{corollary}\label{c-symmetric}
Let $p, q\in \pi(S_n)$. Then $(p\sim q)_{S_n}$ if and only if
$p+q\leqslant n$.
\end{corollary}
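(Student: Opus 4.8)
The plan is to derive this directly from Lemma \ref{adj-SA}(2), which characterizes exactly which element orders occur in $S_n$ via the arithmetic function $S$. By the definition of the prime graph given in the introduction, two distinct vertices $p$ and $q$ are adjacent in $\Gamma(S_n)$ precisely when $pq\in\omega(S_n)$, that is, when $S_n$ possesses an element of order $pq$. So the entire claim reduces to deciding, for distinct primes $p,q\in\pi(S_n)$, whether $pq$ belongs to the spectrum $\omega(S_n)$.

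First I would invoke Lemma \ref{adj-SA}(2) with $m=pq$, which tells us that $pq\in\omega(S_n)$ if and only if $S(pq)\leqslant n$. The only computation required is to evaluate $S(pq)$. Since $p$ and $q$ are distinct primes, the prime factorization of $pq$ is simply $p^1q^1$, and hence by the definition of $S$ one has $S(pq)=p^1+q^1=p+q$. Combining these two observations gives $(p\sim q)_{S_n}\iff pq\in\omega(S_n)\iff S(pq)\leqslant n\iff p+q\leqslant n$, which is exactly the assertion.

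I do not anticipate any genuine obstacle, as this is a direct specialization of the preceding lemma; the statement is essentially a dictionary translation of Lemma \ref{adj-SA}(2) into graph-theoretic language. The one point worth being careful about is the implicit hypothesis that $p$ and $q$ are \emph{distinct} primes, which is what makes the factorization of $pq$ squarefree and forces $S(pq)=p+q$ rather than some larger prime-power sum. This mirrors the slightly more delicate alternating-group version in Corollary \ref{c-alternating}, where the parity conditions of Lemma \ref{adj-SA}(1) introduce the extra case distinction for adjacency with the vertex $2$; for $S_n$ no such distinction arises, so the criterion is uniform across all pairs of primes.
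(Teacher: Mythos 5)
Your proposal is correct and follows exactly the route the paper intends: the corollary is stated as an immediate consequence of Lemma \ref{adj-SA}(2), obtained by taking $m=pq$ and observing that $S(pq)=p+q$ for distinct primes $p,q$. The paper gives no separate proof, and your remark about distinctness of $p$ and $q$ (needed so that $pq$ is squarefree) is the only point of care, which you handle properly.
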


The Goldbach conjecture says that every even natural number $n$
greater than $4$ can be written as the sum of two odd
primes. In what follows, we will need a stronger conjecture:\\[0.3cm]
\noindent {\bf Strong Goldbach Conjecture.} {\em Every even
natural number $n$ greater than six can be written as the sum of
two distinct odd primes.}\\[0.3cm]
\indent We can now state the connection between the strong
Goldbach conjecture and the adjacency of vertices in the prime
graph of a symmetric group:
\begin{theorem} {\rm \cite[Theorem 3]{Bam}}\label{equiv-sgc} The following statements are equivalent:
\item[{\rm (1)}] the strong Goldbach conjecture is true;
\item[{\rm (2)}] for each even $n>6$, $\Gamma(S_{n-1})\subsetneqq \Gamma(S_n)$.
\end{theorem}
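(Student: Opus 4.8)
The plan is to unfold both statements into elementary conditions on primes using the adjacency criterion of Corollary \ref{c-symmetric}, after which the equivalence becomes almost definitional. Recall that $\pi(S_m)$ is the set of all primes not exceeding $m$, since $|S_m|=m!$. For an even integer $n>6$ the number $n$ is composite, so the largest prime not exceeding $n$ coincides with the largest prime not exceeding $n-1$; hence $\pi(S_{n-1})=\pi(S_n)$, and the graphs $\Gamma(S_{n-1})$ and $\Gamma(S_n)$ share the same vertex set. By Corollary \ref{c-symmetric}, distinct primes $p,q$ form an edge of $\Gamma(S_m)$ precisely when $p+q\leqslant m$.

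First I would observe that the edge relation is monotone in $m$: if $p+q\leqslant n-1$ then $p+q\leqslant n$, so every edge of $\Gamma(S_{n-1})$ is an edge of $\Gamma(S_n)$, giving $\Gamma(S_{n-1})\subseteq \Gamma(S_n)$ for every even $n>6$. Consequently the proper inclusion in statement (2) holds for a fixed such $n$ if and only if $\Gamma(S_n)$ possesses an edge absent from $\Gamma(S_{n-1})$, i.e. a pair of distinct primes $p,q$ with $n-1<p+q\leqslant n$. Since $p+q$ is an integer, this forces $p+q=n$.

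The next step is to identify such an edge with a strong-Goldbach representation of $n$. If $p+q=n$ with $p\neq q$ and, say, $p=2$, then $q=n-2$ would be an even number exceeding $2$ (as $n>6$), hence composite, a contradiction; therefore both $p$ and $q$ are odd. Thus the existence of an edge of $\Gamma(S_n)$ outside $\Gamma(S_{n-1})$ is equivalent to writing $n$ as a sum of two distinct odd primes. Quantifying over all even $n>6$, statement (2) becomes exactly the assertion that every even $n>6$ is a sum of two distinct odd primes, which is statement (1). This proves both implications simultaneously.

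The content here is really the translation rather than any deep estimate: once Corollary \ref{c-symmetric} is in hand, the theorem is a definitional unfolding. The only points demanding care---and the place where a careless argument could slip---are verifying that the two vertex sets genuinely coincide (which relies on $n$ being composite, true for even $n>6$) and confirming that the edge condition $p+q=n$ rules out the vertex $2$, so that the representation produced is by two \emph{distinct odd} primes exactly as the strong Goldbach conjecture demands.
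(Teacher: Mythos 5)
Your proposal is correct and follows the same route the paper intends: the paper's proof is the one-line remark that the equivalence ``follows immediately from Lemma \ref{adj-SA} (2)'', and your argument is precisely the detailed unfolding of that adjacency criterion (via Corollary \ref{c-symmetric}), including the two points that genuinely need checking --- equality of vertex sets for even composite $n>6$ and the exclusion of the prime $2$ from a representation $p+q=n$. No gap; you have simply written out what the paper leaves implicit.
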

\begin{proof} It follows immediately from Lemma \ref{adj-SA} (2). \end{proof}

The coincidence criterion for pairwise nonisomorphic symmetric
groups (statement (1) of the Lemma \ref{newlemma}) is obtained
modulo strong Goldbach conjecture.

\begin{lm}\label{newlemma}  Let $m$ and $n$ be natural numbers with $2\leqslant m<n$. The prime
graphs of symmetric groups $S_m$ and $S_n$ are equal if and only
if $m=n-1$ and one of the following holds: \begin{itemize}
\item[{\rm (1)}] both $n$ and $n-2$ are non-prime odd numbers.
\item[{\rm (2)}] $n=4$ or $6$.
\end{itemize}
\end{lm}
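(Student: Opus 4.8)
The plan is to characterize exactly when two symmetric groups have identical prime graphs, working modulo the strong Goldbach conjecture. Since the prime graph $\Gamma(S_n)$ has vertex set $\pi(S_n)=\pi(n!)=\{p : p\leqslant n\}$, the very first necessary condition for $\Gamma(S_m)=\Gamma(S_n)$ is that the vertex sets coincide, i.e. $\pi(m!)=\pi(n!)$. Since $m<n$, this forces that there is no prime strictly between $m$ and $n$; combined with Bertrand's postulate and the structure of primes, I would first argue that the vertex sets can agree only when $n-m$ is small, and in fact that $m=n-1$ is forced except possibly for tiny cases. Concretely, if $m<n$ and $\pi(\{p\leqslant m\})=\pi(\{p\leqslant n\})$, then no prime lies in $(m,n]$; in particular $n$ is not prime and no prime occupies the gap, which already pushes toward $m=n-1$.

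Next I would use the edge criterion from Corollary~\ref{c-symmetric}: $(p\sim q)_{S_n}$ iff $p+q\leqslant n$. The key point is that $\Gamma(S_m)\subseteq\Gamma(S_n)$ always holds when $\pi(S_m)=\pi(S_n)$ and $m<n$, since $p+q\leqslant m$ implies $p+q\leqslant n$. Thus equality $\Gamma(S_m)=\Gamma(S_n)$ is equivalent to the \emph{reverse} inclusion, namely that no new edge appears in passing from $S_m$ to $S_n$. An edge $\{p,q\}$ is new precisely when $m<p+q\leqslant n$. So I must show that, under the vertex-set constraint, the absence of new edges is equivalent to the stated conditions. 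This is exactly where Theorem~\ref{equiv-sgc} enters: it tells us that (modulo strong Goldbach) for even $n>6$ we always have $\Gamma(S_{n-1})\subsetneqq\Gamma(S_n)$, i.e. a new edge always appears. Hence equality with $m=n-1$ can only occur when $n$ is \emph{odd} (for $n>6$), which is why the conditions in (1) restrict $n$ to odd non-prime values.

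I would then reduce to the case $m=n-1$ and analyze parities. Write $n$ odd and consider whether a new edge $p+q=n$ or $p+q=n-1$ could be created. Passing from $S_{n-1}$ to $S_n$ introduces edges $\{p,q\}$ with $p+q=n$; since $n$ is odd, $p+q=n$ forces one of $p,q$ to be $2$, giving a potential new edge $\{2,n-2\}$ exactly when $n-2$ is prime. Therefore, to have \emph{no} new edge we need $n-2$ to be non-prime. This explains condition (1): both $n$ and $n-2$ are non-prime odd numbers (the requirement that $n$ itself be non-prime comes from the vertex-set condition, since if $n$ were prime it would be a new vertex and the graphs could not have equal vertex sets). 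The small exceptional cases $n=4$ and $n=6$ in condition (2) arise because the asymptotic/Goldbach arguments break down for small $n$, and must be verified by direct inspection of $\Gamma(S_3),\Gamma(S_4),\Gamma(S_5),\Gamma(S_6)$.

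Finally, for the converse I would verify that each listed configuration indeed yields equal graphs: assuming $n$ odd with both $n$ and $n-2$ non-prime guarantees the vertex sets agree (no prime in $(n-1,n]$) and that the only candidate new edge $\{2,n-2\}$ fails to exist, so $\Gamma(S_{n-1})=\Gamma(S_n)$; and the two small cases are checked by hand. I expect the main obstacle to be the careful parity bookkeeping in the forward direction, specifically ruling out \emph{all} possible new edges rather than just the edge involving $2$: I must confirm that when $n$ is odd, every new edge $\{p,q\}$ with $n-1<p+q\leqslant n$ reduces to $p+q=n$ with $2\in\{p,q\}$, and that the even-$n$ case is fully absorbed by Theorem~\ref{equiv-sgc}. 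Ensuring these cases are exhaustive, and correctly invoking the strong Goldbach conjecture to eliminate the even-$n$ possibilities, is the delicate part of the argument.
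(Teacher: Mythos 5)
Your overall strategy mirrors the paper's: vertex sets agree, monotonicity of the edge relation reduces equality to the non-appearance of new edges, Theorem \ref{equiv-sgc} kills the even case, and the parity argument for odd $n$ isolates the single candidate edge $\{2,n-2\}$. However, there is one genuine gap: your reduction to $m=n-1$. You claim that the vertex-set condition $\pi(S_m)=\pi(S_n)$ ``combined with Bertrand's postulate'' forces $m=n-1$ up to tiny cases, but this is false. Prime gaps can be arbitrarily long, so the vertex sets of $S_m$ and $S_n$ coincide whenever no prime lies in $(m,n]$, which happens for arbitrarily large $n-m$ (e.g.\ $m=90$, $n=96$). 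The vertex sets alone cannot rule out $m<n-1$; only the edge structure can. The paper's argument is: if $m<n-1$, then one of $n$, $n-1$ is even and (for $n>6$) the strong Goldbach conjecture writes it as $p+q$ with $p\neq q$ distinct odd primes, and since $m\leqslant n-2<p+q\leqslant n$, Corollary \ref{c-symmetric} gives an edge of $\Gamma(S_n)$ missing from $\Gamma(S_m)$. You never run this argument for $m<n-1$; you apply Theorem \ref{equiv-sgc} and the parity analysis only \emph{after} assuming $m=n-1$.

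The gap is repairable with tools you already have in hand: since $\Gamma(S_m)\subseteq\Gamma(S_{m+1})\subseteq\cdots\subseteq\Gamma(S_n)$, any even index $k$ with $6<k\leqslant n$ and $m<k$ forces a strict inclusion $\Gamma(S_{k-1})\subsetneqq\Gamma(S_k)$ by Theorem \ref{equiv-sgc}, and if $m<n-1$ such a $k$ exists among $\{n-1,n\}$ (small $n$ being checked by hand). But as written, the step is justified by an incorrect claim about vertex sets, so the forward direction of the proof is incomplete. The remainder of your argument --- oddness of $n$, non-primality of $n$ from the vertex sets, non-primality of $n-2$ from the absence of the edge $\{2,n-2\}$, the direct verification of $n=4,6$, and the converse --- matches the paper and is sound.
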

\begin{proof}$^\ast$\footnotetext{$^\ast$The idea of proof was borrowed from \cite{zev}}
$(\Longrightarrow)$ In the case when $n\leqslant 6$, it is a
straightforward verification. In fact, the equality of the prime
graphs $\Gamma(S_n)$ and $\Gamma(S_{n-1})$, for $n\in \{4, 6\}$,
can be obviously verified using Corollary \ref{c-symmetric}.
Assume now that $n>6$. We first claim that $m=n-1$. If not, then
$m<n-1$ and hence one of the numbers $n$ and $n-1$ is even. Since
$n\geqslant 7$, it follows from strong Goldbach conjecture that
there exist two distinct odd primes $p$ and $q$ with $m\leqslant
n-2<p+q\leqslant n$. Hence $p$ is adjacent to $q$ in
$\Gamma(S_n)$, while $p$ is nonadjacent to $q$ in $\Gamma(S_m)$,
so these graphs are not equal. This contradiction shows $m=n-1$
as claimed.

By Theorem \ref{equiv-sgc}, we may assume that $n$ is an odd
number. Assume that $\Gamma(S_n)\neq \Gamma(S_{n-1})$. The sets of
vertices are distinct if and only if $n$ is a prime. If $n$ is a
composite number, then the sets of vertices are equal, and so the
sets of edges should be distinct. Hence there exist primes $p,
q\in \pi(S_n)$ with $n-1<p+q\leqslant n$. Then obviously $p+q=n$,
but $n$ is odd. The only possible case is $\{p, q\}=\{2, n-2\}$,
and so $n-2\in \pi(S_n)$ and $n-2$ is a prime.

$(\Longleftarrow)$ The conclusion follows immediately from
Corollary \ref{c-symmetric}.
\end{proof}

We need the following lemma to find some infinite families of
alternating and symmetric groups which are not OD-characterizable.
\begin{lm}\label{newlemmaprime}  Let $p$ be an odd prime number.
There are infinitely many natural numbers $n$ such that
$\triangle(p^n)>4$.
\end{lm}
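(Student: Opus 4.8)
The plan is to show, for infinitely many $n$, that none of the five consecutive integers $p^n-4,\ p^n-3,\ p^n-2,\ p^n-1,\ p^n$ is prime. Since $l_{p^n}$ is by definition the largest prime not exceeding $p^n$, this forces $l_{p^n}\leqslant p^n-5$ and hence $\triangle(p^n)=p^n-l_{p^n}\geqslant 5>4$. Thus the entire task reduces to producing infinitely many $n$ for which these five numbers are simultaneously composite.

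First I would dispose of the three easy cases. Because $p$ is odd, $p^n$ is odd, so both $p^n-1$ and $p^n-3$ are even; for $n\geqslant 2$ they exceed $2$ and are therefore composite. Likewise $p^n$ itself is a proper prime power for $n\geqslant 2$, hence composite. This leaves only the two odd candidates $p^n-2$ and $p^n-4$, and these carry the real content of the lemma.

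Next I would restrict attention to even exponents $n=2m$. Then $p^n-4=(p^m-2)(p^m+2)$, and for $m\geqslant 2$ both factors exceed $1$, so $p^n-4$ is automatically composite. To handle $p^n-2$ I would bring in a congruence argument: choose an odd prime $q$ dividing $p^2-2$ (such a $q$ exists because $p^2-2\geqslant 7$ is odd, and necessarily $q\neq p$, since $q\mid p^2-2$ together with $q=p$ would force $p\mid 2$). Writing $d=\mathrm{ord}_q(p)$, the relation $p^2\equiv 2\pmod q$ propagates to $p^n\equiv 2\pmod q$ for every $n\equiv 2\pmod d$; for $n\geqslant 4$ we then have $q\leqslant p^2-2<p^n-2$, so $q$ is a proper divisor of $p^n-2$ and $p^n-2$ is composite.

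Finally I would combine the two devices. The arithmetic progression $n\equiv 2\pmod d$ contains infinitely many even $n$ — all of its terms are even when $d$ is even, and they alternate in parity when $d$ is odd — and discarding finitely many terms leaves infinitely many even $n\geqslant 4$. For every such $n$ all five of $p^n-4,\ldots,p^n$ are composite, which gives $\triangle(p^n)>4$ and proves the lemma. The single point that needs care, and the one I would flag as the main obstacle, is guaranteeing that the congruence forcing $p^n-2$ composite is compatible with the parity condition that makes $p^n-4$ factor; anchoring the progression at the \emph{even} witness exponent $2$ (rather than at an arbitrary exponent where $q$ happens to divide $p^k-2$) is precisely what lets the two conditions coexist, sidestepping any Chinese-Remainder compatibility issue between two independent congruences.
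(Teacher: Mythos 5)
Your proposal is correct. The skeleton is the same as the paper's: restrict to even exponents so that $p^n-4=(p^{n/2}-2)(p^{n/2}+2)$ is composite, dispose of $p^n$, $p^n-1$, $p^n-3$ trivially, and then force $p^n-2$ to be composite by exhibiting a proper prime divisor along an arithmetic progression of exponents. Where you genuinely diverge is in how that prime divisor is produced. The paper argues by dichotomy on the sequence $a^N-b$ with $a=p^2$, $b=2$: either every term is composite (done), or some term is a prime $q>a$, and then Fermat's little theorem gives $q\mid a^{k+(q-1)m}-b$ for all $m$, with these values exceeding $q$. You instead anchor at the single value $p^2-2$, pick any odd prime factor $q$ of it (which exists and satisfies $q\neq p$, as you check), and use the multiplicative order $d=\mathrm{ord}_q(p)$ to get $q\mid p^n-2$ for all $n\equiv 2\pmod d$. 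Your route buys uniformity and explicitness: there is no case split, the progression is computable from $p$, and anchoring at the even exponent $2$ makes the parity condition needed for the $p^n-4$ factorization automatic once you select the even terms of the progression --- a compatibility point you rightly flag and settle. It is also slightly more careful than the paper at the edges: the paper's claim that $p^n-4$ is ``always composite'' for even $n$ fails at $p=3$, $n=2$ (where $3^2-4=5$), and its assertion that a prime value $q=a^k-b$ necessarily exceeds $a$ needs the harmless exclusion of $k=1$; your restriction to $n\geqslant 4$ sidesteps both. Both proofs are valid; yours is the cleaner one.
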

\begin{proof} Take $n$ to be an even natural number. Clearly,
$p^n-4$ is always composite in this case, and so we need only
consider $p^n-2$. Let $a=p^2$ and $b=2$. We will now show that
given any positive integers $a$, $b$, we can find an infinite
number of values for $N$ such that $a^N - b$ is composite. If
$a^N-b$ is always composite, we are done. Otherwise, there exists
a positive integer $k$ such that  $a^k-b=q>a$ is prime. Then for
all positive integers $m$, we have
$$a^{k+(q-1)m}\equiv b \pmod{q},$$ and certainly $a^{k+(q-1)m} - b
> q$ for all $m>0$, so $a^{k+(q-1)m} - b$ must be composite for all
positive integers $m$.
\end{proof}

A possible generalization of Lemma \ref{newlemmaprime} is the
following statement, which seems intuitive to be true. Let $p$ be
an odd prime. Then, there are infinitely many positive integers
$n$ such that
$$p^n-2, \ \ p^n-4,  \ \ p^n-6, \ \ \ldots, \ \ p^n-(p-1),$$ are
composite. Alternatively, this problem can be formulated as
follows.
\begin{problem}
Let $p$ be an odd prime. Do there exist infinitely many positive
integers $n$ such that $\Delta(p^n)>p$.
\end{problem}

\section{The symmetric group $S_{27}$}
The aim of this section is to find the number of non-isomorphic
groups with the same order and degree pattern as the symmetric
group $S_{27}$. Indeed, we will show that there are 38 such
groups.

\noindent {\em Proof of Theorem $\ref{th12}$.}  Let $G$ be a group
satisfying the following conditions:
\begin{itemize}
\item[{\rm (1)}] \  $|G|=|S_{27}|=2^{23}\cdot 3^{13}\cdot 5^6\cdot
7^3\cdot 11^2\cdot 13^2\cdot 17 \cdot 19 \cdot 23$ and
\item[{\rm (2)}] \  ${\rm D}(G)={\rm D}(S_{27})=(8, 8, 7, 7, 5, 5, 4, 4, 2)$.
\end{itemize}

Under these conditions, we conclude immediately that
$\Gamma(G)=\Gamma(S_{27})$ (see also \cite[Lemma 2.15]{kogani}).
Letting $R$ be the solvable radical of $G$, we break the proof
into a number of separate lemmas.
\begin{lm}\label{solubleradical} $R$ is a $\{2, 3, 5, 7, 11\}$-group. In
particular, $G$ is nonsolvable.
\end{lm}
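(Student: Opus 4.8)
The plan is to determine the prime structure of the solvable radical $R$ by exploiting the degree pattern constraints. The key observation is that the degree pattern $D(S_{27})=(8,8,7,7,5,5,4,4,2)$ encodes which primes are adjacent in $\Gamma(G)=\Gamma(S_{27})$, and via Corollary~\ref{c-symmetric} this graph is completely determined: for primes $p,q\in\pi(S_{27})=\{2,3,5,7,11,13,17,19,23\}$, we have $(p\sim q)$ iff $p+q\leqslant 27$. First I would read off from this criterion exactly which primes are ``large'' in the sense of being poorly connected. For instance, $23$ is adjacent only to $2$ (since $23+3=26\leqslant 27$ fails as $23+3=26\leqslant 27$ actually holds, but $23+2=25\leqslant 27$; one checks $23+3=26\leqslant 27$ so $3\sim 23$ too, etc.), and I would tabulate $\deg(p)$ for each vertex to confirm consistency with the given degree sequence and, more importantly, to identify the isolated or near-isolated vertices at the top end, namely $13,17,19,23$.

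The main structural tool will be the standard fact that if $R$ is the solvable radical of $G$, then $G/R$ embeds into $\mathrm{Aut}(\mathrm{Soc}(G/R))$ where the socle of $G/R$ is a direct product of nonabelian simple groups, and crucially $\pi(R)$ must consist of primes that are ``well-connected'' to the rest of the graph. The heart of the argument is to show that each of $13,17,19,23$ cannot divide $|R|$. For this I would argue by contradiction: if a prime $r\in\{13,17,19,23\}$ divided $|R|$, then since $R$ is nilpotent-by-(something) or at least solvable, Hall subgroup and Frobenius-type arguments force elements of order $rq$ for various primes $q\in\pi(G)$, contradicting the adjacency pattern dictated by $\Gamma(S_{27})$. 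Concretely, a solvable group acting on its own chief factors tends to produce adjacencies; the sparse connectivity of the large primes in $\Gamma(S_{27})$ is incompatible with those primes lying in the solvable radical. This yields $\pi(R)\subseteq\{2,3,5,7,11\}$.

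The hard part, as I expect it, will be ruling out $11\mid|R|$ versus retaining it, and more generally making the ``solvable groups create edges'' heuristic fully rigorous. The clean way is to invoke the Gruenberg--Kegel theorem on the structure of groups with disconnected prime graph together with the known nonadjacency relations: one shows that if $r\in\pi(R)$ then $r$ must be adjacent in $\Gamma(G)$ to every prime in $\pi(G/R)$ that acts nontrivially, and by analyzing the nonedges of $\Gamma(S_{27})$ at the vertices $13,17,19,23$ one derives a contradiction for each. I would then separately verify that $11$ is permitted (its adjacency degree is large enough to survive this filter, and indeed $S_{27}$ itself realizes $11$ inside a solvable-type obstruction), completing $\pi(R)\subseteq\{2,3,5,7,11\}$.

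Finally, the ``in particular'' clause that $G$ is nonsolvable follows cheaply once $\pi(R)\subsetneq\pi(G)$: since $R$ is a proper subset of the prime divisors of $|G|$, we have $R\neq G$, so $G/R$ is a nontrivial group with solvable radical equal to the identity, forcing $G/R$ to have nontrivial socle consisting of nonabelian simple groups; hence $G$ is nonsolvable. I would record this as an immediate corollary rather than a separate computation. The overall flow is thus: (i) fix $\Gamma(G)=\Gamma(S_{27})$ and tabulate adjacencies, (ii) use solvable-radical structure plus Gruenberg--Kegel to exclude $13,17,19,23$ from $\pi(R)$, and (iii) deduce nonsolvability.
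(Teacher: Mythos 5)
Your proposal has the right target (show $13,17,19,23\notin\pi(R)$ using the non-adjacency of these primes in $\Gamma(S_{27})$, then deduce nonsolvability from $R\neq G$), but the machinery you propose does not actually close the argument, and the paper's proof runs on a mechanism you never name. The concrete engine in the paper is the Frattini argument: if $q\in\pi(R)$ and $R_q\in{\rm Syl}_q(R)$, then $G=R\,N_G(R_q)$, so $N_G(R_q)$ covers $G/R$ and its order is divisible by every prime of $G$ outside $\pi(R)$ --- in particular by $23$ once one knows $23\nmid|R|$. A subgroup $L$ of order $23$ inside $N_G(R_q)$ then acts on $R_q$ (of order dividing $q^2$ for $q\in\{13,17,19\}$), and since $23\nmid|{\rm Aut}(R_q)|$ it centralizes $R_q$, producing an element of order $23q$ and contradicting $q+23>27$. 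The case $q=23$ has to be bootstrapped separately: there the paper takes $x\in R$ of order $23$, shows $N_G(\langle x\rangle)$ is a $\{2,3,11,23\}$-group via $C_G(x)$ and the embedding of $N/C$ in ${\mathbb Z}_{22}$, applies Frattini to $G=R\,N_G(\langle x\rangle)$ to force $19\in\pi(R)$, and then uses solvability of $R$ to extract a Hall $\{19,23\}$-subgroup of order $19\cdot 23$, which is cyclic and yields the forbidden adjacency $(19\sim 23)_G$. None of these steps appears in your outline.

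Two of your proposed tools are also misdirected. The Gruenberg--Kegel theorem concerns groups with \emph{disconnected} prime graph, whereas $\Gamma(S_{27})$ is connected (indeed $2$ is adjacent to every other vertex, since $p+2\leqslant 27$ for all $p\in\pi(S_{27})$), so it gives no purchase here. And the assertion that a prime $r\in\pi(R)$ ``must be adjacent to every prime in $\pi(G/R)$ that acts nontrivially'' is not a theorem; the correct statement one can extract is precisely the Frattini/centralizer computation above, and it requires the order bounds on the Sylow $q$-subgroups of $R$ (read off from condition (1) on $|G|$) to conclude that $23$ cannot act nontrivially. As written, your step (ii) is a heuristic rather than a proof, and the $23$ case cannot be handled uniformly with the others. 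The final deduction of nonsolvability from $\pi(R)\subsetneq\pi(G)$ is correct and matches the paper.
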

\begin{proof} First, we show that $R$ is a $23'$-group. Assume the contrary.
Let $23\in \pi(R)$ and let $x$ be an element of $R$ of order $23$.
Put $C=C_G(x)$ and $N=N_G(\langle x\rangle)$. By the structure of
$\Gamma(G)$, it follows that $C$ is a $\{2, 3, 23\}$-group. Since
$N/C$ is embedded in ${\rm Aut}(\langle x\rangle)\cong {\mathbb
Z}_{22}$, $N$ is a $\{2, 3, 11, 23\}$-group. Using Frattini
argument we get $G=RN$, and so $19\in \pi(R)$. Thus $R$ contains
a Hall $\{19, 23\}$-subgroup $L$ of order $19\cdot 23$. Clearly,
$L$ is cyclic, and so $(19\sim 23)_G$, which is a contradiction.

Next, we show that $R$ is a $q'$-group for $q\in \{13, 17, 19
\}$. Let $q\in \pi(R)$, $R_q\in {\rm Syl}_q(R)$ and $N=N_G(R_q)$.
Again, by Frattini argument $G=RN$ and since $R$ is a $23'$-group
we deduce that $23$ divides the order of $N$. Let $L$ be a
subgroup of $N$ of order $23$. Since $L$ normalizes $R_q$, $LR_q$
is a subgroup of order $23\cdot |R_q|$, which is abelian. But
then $(q\sim 23)_G$, which is a contradiction.

Finally, $R$ is a $\{2, 3, 5, 7, 11\}$-group, and since $R\neq
G$, it follows that $G$ is nonsolvable. This completes the proof
of lemma.
\end{proof}

In what follows, we put $\overline{G}=G/R$ and $S={\rm
Soc}(\overline{G})$. Clearly, $S$ is a direct product
$$S=P_1\times P_2\times \cdots \times P_m,$$ where the $P_i$ are
nonabelian simple groups, and we have $$S\leqslant
\overline{G}\leqslant {\rm Aut}(S).$$ We show now that $S$ is a
simple group, or equivalently $m=1$.
\begin{lm}\label{almostsg} $m=1$. In particular, $\overline{G}$ is an almost simple group.
\end{lm}
\begin{proof} By way of contradiction, let
$m\geqslant 2$. In this case $23$ does not divide $|S|$, for
otherwise $\deg(23)\geqslant 3$, which is  a contradiction. Hence, for
every $i$ we have $P_i\in \mathcal{S}_{19}$. Therefore $23\in
\pi(\overline{G})\subseteq \pi({\rm Aut}(S))$, and so $23$
divides the order of ${\rm Out}(S)$. But
$${\rm Out}(S)={\rm Out}(Q_1)\times{\rm Out}(Q_2)\times\cdots\times {\rm Out}(Q_r),$$
where $Q_i$ is a direct product of $n_i$ {\em isomorphic} copies
of a simple group $P_i$ such that $$S\cong Q_1\times Q_2\times
\cdots \times Q_r.$$ Therefore, for some $j$, $23$ divides the
order of the outer automorphism group of $Q_j$ of $n_j$ isomorphic
simple groups $P_j$. Since $P_j\in \mathcal{S}_{19}$, it follows
that $|{\rm Out}(P_j)|$ is not divisible by $23$ (see
\cite{atlas, zav}). Moreover, since ${\rm Out}(Q_j)={\rm
Out}(P_j)\wr S_{n_j}$, it follows that
$$|{\rm Out}(Q_j)|=|{\rm Out}(P_j)|^{n_j}\cdot (n_j)!.$$ This forces
$n_j\geqslant 23$, and so $2^{46}$ must divide the order of $G$,
which is a contradiction. Therefore $m=1$ and $S=P_1$. \end{proof}

\begin{lm}\label{final} There
are exactly $38$ possibilities for the group $G$.
\end{lm}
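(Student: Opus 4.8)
The plan is to pin down the almost simple group $\overline{G}$ first, then account for the solvable radical $R$, and finally translate group-theoretic possibilities into an exact count. By Lemma~\ref{almostsg} we have $S\leqslant\overline{G}\leqslant\mathrm{Aut}(S)$ with $S$ a nonabelian simple group, and since $23\in\pi(G)=\pi(\overline{G})\cup\pi(R)$ while $R$ is a $\{2,3,5,7,11\}$-group by Lemma~\ref{solubleradical}, we must have $23\in\pi(S)$. I would then invoke the classification of finite simple groups whose order is divisible by $23$ but involves no primes outside $\pi(G)=\{2,3,5,7,11,13,17,19,23\}$, using the fact that $|S|$ must divide $|G|$ so the exponents of each prime in $|S|$ are bounded. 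This drastically restricts $S$; the natural candidate (and the one forced by degree-pattern considerations) should be $S\cong A_{23}$ or a closely related sporadic/classical group, and one checks which of these have a prime graph compatible with $\Gamma(S_{27})$ on the common vertices. The adjacency criteria in Corollaries~\ref{c-alternating} and~\ref{c-symmetric}, together with the known component structure, are the tools for this compatibility check.

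Once $S$ is identified, the next step is to determine $\overline{G}$ exactly: since $S\leqslant\overline{G}\leqslant\mathrm{Aut}(S)$, and $\mathrm{Out}(S)$ is small and explicitly known, there are only a handful of choices for $\overline{G}$ (typically $S$ itself and $S.2$), and each must be tested against the constraint that $\Gamma(G)=\Gamma(S_{27})$ and that $|G|/|\overline{G}|=|R|$ has the correct prime content. I would then turn to the structure of $R$: having fixed $|R|$ as a $\{2,3,5,7,11\}$-number of known exponents, $G$ is (up to the relevant extension data) built from $R$ and $\overline{G}$. The crucial reduction is that, because $R$ is the solvable radical and the action of $\overline{G}$ on $R$ is constrained by the degree pattern (no new adjacencies may be created among the primes of $R$ and the primes of $\overline{G}$), the extension $1\to R\to G\to\overline{G}\to 1$ must in fact force $R$ to be central or to decompose as a controlled direct/subdirect product, and this is where the fibre product construction from Section~2 enters. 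The number $38$ will emerge as a count of the admissible isomorphism types of such extensions.

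The main obstacle, and the genuinely arithmetical heart of the lemma, will be the precise enumeration in the final step: translating the allowable extension data into exactly $38$ isomorphism classes. This requires computing $\nu(m)$ (and $\nu_a(m)$) for the relevant $\{2,3,5,7,11\}$-orders, classifying the admissible actions of $\overline{G}$ on the abelian sections of $R$ that do not disturb the prime graph, and then using the subdirect/fibre product machinery to assemble the distinct groups without overcounting. I expect that for each fixed $\overline{G}$ one obtains a fixed number of possibilities for $R$ and its $\overline{G}$-module structure, and that summing over the few choices of $\overline{G}$ yields the total $38$; verifying that no two assemblies are isomorphic and that none have been missed is the delicate bookkeeping. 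The earlier lemmas do the qualitative work of cornering $S$ and $R$, so the proof of Lemma~\ref{final} is essentially this final, careful count, with the isomorphism-type functions $\nu$ and $\nu_a$ and the fibre-product description providing the exact arithmetic.
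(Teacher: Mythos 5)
There is a genuine gap, and it sits exactly where the $38$ comes from. Your identification of the socle is wrong: you propose $S\cong A_{23}$ (or something ``closely related'') as the natural candidate, but $A_{23}$, $A_{24}$ and $A_{25}$ are all impossible because $13$ divides their orders only to the first power, whereas $13^2\cdot 17\cdot 19\cdot 23$ must divide $|S|$ (since $R$ is a $\{2,3,5,7,11\}$-group by Lemma~\ref{solubleradical} and $\{13,17,19,23\}\cap\pi(\mathrm{Out}(S))=\emptyset$). The paper's comparison with the list of simple groups in $\mathcal{S}_{23}$ forces $S\cong A_{26}$ or $S\cong A_{27}$. Missing the case $S\cong A_{26}$ is fatal: the case $S\cong A_{27}$ contributes only $3$ groups ($S_{27}$, $\mathbb{Z}_2\times A_{27}$, $\mathbb{Z}_2\cdot A_{27}$), while the remaining $35$ all arise from $S\cong A_{26}$, with $|R|=54$ when $G/R\cong A_{26}$ and $|R|=27$ when $G/R\cong S_{26}$. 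This overlooked case is precisely the error in the earlier literature that the theorem corrects, so a proof sketch that never reaches it cannot produce the count.

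Beyond that, the enumeration is not the soft ``bookkeeping with $\nu$ and $\nu_a$'' you describe; it rests on specific structural arguments you do not supply. For $G/R\cong A_{26}$ with $|R|=54$ one shows, via $|\mathrm{Aut}(R)|<|A_{26}|$, that $C_G(R)R=G$, and then uses the fact that the Schur multiplier of $A_{26}$ has order $2$ to conclude that either $A_{26}$ or $2\cdot A_{26}$ splits off as a direct factor; this yields $15+5=20$ groups ($15=\nu(54)$ and $5=\nu(27)$). For $G/R\cong S_{26}$ with $|R|=27$ one needs a transitivity argument comparing $|A_{26}|$ with $|S_a\times S_b|$ and with $|\mathrm{GL}(3,3)|$ to rule out a faithful action on $R$, again the Schur multiplier of $A_{26}$, and then the fibre-product construction of $S_{26}$ with the $15$ groups of order $54$ over $\mathbb{Z}_2$, giving $15$ more groups. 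Your proposal gestures at the fibre product but gives no mechanism for why $R$ must centralize (most of) the simple section, which is the step that makes the count finite and exact. The total $3+20+15=38$ cannot be recovered from the outline as written.
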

\begin{proof}
By Lemma \ref{almostsg}, we have  $S\leqslant
\overline{G}\leqslant {\rm Aut}(S)$, where $S$ is a nonabelian
simple group in $\mathcal{S}_{23}$, and so $\{13, 17, 19,
23\}\cap \pi({\rm Out}(S))=\emptyset$, (see \cite{atlas, zav}).
Now, it follows from Lemma \ref{solubleradical} and condition (1)
that
$$|S|=2^{a}\cdot 3^{b}\cdot 5^{c}\cdot
7^{d}\cdot 11^{e}\cdot 13^2\cdot 17 \cdot 19 \cdot 23,$$ where
$2\leqslant a\leqslant 23$, $0\leqslant b\leqslant 13$,
$0\leqslant c\leqslant 6$, $0\leqslant d\leqslant 3$ and
$0\leqslant e\leqslant 2$. Comparing this with the nonabelian
simple groups in $\mathcal{S}_{23}$, we obtain $S\cong {A}_{26}$
or ${A}_{27}$. We refer to \cite{zav} for the list of nonabelian
simple groups in $\mathcal{S}_{23}$. In the sequel, we deal with
two cases separately.

\begin{itemize}
\item[{\rm (1)}] $S\cong {A}_{27}$. In this case, we have
${A}_{27}\leqslant G/R\leqslant {S}_{27}$. Thus $G/R\cong
{A}_{27}$ or $G/R\cong {S}_{27}$. If $G/R\cong {A}_{27}$, then
$|R|=2$. Clearly, $R\leqslant Z(G)$ and $G$ is a central
extension of $R\cong {\mathbb Z}_2$. If $G$ splits over $R$, then
$G\cong {\mathbb Z}_2\times {A}_{27}$, otherwise we have $G\cong
{\mathbb Z}_2\cdot {A}_{27}$ (non-split extension). Next, we
assume $G/R\cong {S}_{27}$. In this case $R=1$ and so $G\cong
{S}_{27}$. Finally, in the case when $S\cong {A}_{27}$ there are
three possibilities for $G$.

\item[{\rm (2)}] $S\cong {A}_{26}$. In this case, we have
${A}_{26}\leqslant G/R\leqslant {S}_{26}$, and so $G/R\cong
{A}_{26}$ or $G/R\cong {S}_{26}$. If $G/R\cong {A}_{26}$, then
$|R|=54$. We claim that the only possibilities for $G$ are
$G=R\times {A}_{26}$, where $R$ is an arbitrary group of order
54, and $G=Q \times ({\mathbb Z}_2\cdot{A}_{26})$ where $Q$ is an
arbitrary group of order 27. In the first case, since there are
exactly 15 groups of order 54, there are 15 possibilities for
$G$. In the second case, since there are just 5 groups of order
27, there are 5 possibilities for $G$. To prove our claim, we
first observe that the automorphism group of $R$ has order
smaller than $|A_{26}|$. Now, let $C=C_G(R)$. Then $G/C$ is
isomorphic to a subgroup of ${\rm Aut}(R)$, and so $|G/C| <
|A_{26}|$. It follows that $C$ is not contained in $R$, so $CR>R$
and $CR$ is a normal subgroup of $G$. Since $G/R$ is simple, it
follows that $CR=G$. Let $D=C\cap R$. Since $D\leqslant R$, we
see that $C$ centralizes $D$, and since $D\leqslant C$, we see
that $R$ centralizes $D$, and thus $D\leqslant Z(G)$. Now $C/D$
is isomorphic to $A_{26}$ and $D\leqslant Z(C)$. Since the Schur
multiplier of $A_{26}$ has order 2, the only possibilities are
that $C'\cap D$ has order 1 or 2. If $|C'\cap D|=1$, then
$C'=A_{26}$ and $C=D\times A_{26}$, and in this case, $A_{26}$ is
a direct factor of $G$. In the remaining case, $C'=2\cdot
A_{26}$, and in this case, since $|D|$ is 2 times some power of
3, it follows that $2\cdot A_{26}$ is a direct factor of $C$ and
hence of $G$.

If $G/R\cong {S}_{26}$, then $|R|=27$. Actually, we want to find
(up to isomorphism) all groups $G$ having a normal subgroup $R$ of
order 27 such that $G/R$ is isomorphic to ${S}_{26}$. Let $M<G$
have index 2, where $M/R$ is ${A}_{26}$. Note that $M$ is unique.
Let $C$ be the centralizer of $R$ in $M$. Then $RC$ is normal in
$M$, and since $M/R$ is simple, there are only two possibilities:
$RC=R$ or $RC=M$. We want to show that $RC=M$. Suppose $RC=R$.
Now $M/C$ is embedded in ${\rm Aut}(R)$ and this group faithfully
permutes the 26 non-identity elements of $R$. This action must be
transitive or else $M/C$ is embedded in a direct product of the
form ${S}_a\times {S}_b$ where $a+b=26$ and both $a$ and $b$ are
positive. But $|M/C|\geqslant |{A}_{26}|$ and $$|{S}_a\times
{S}_b|=(a!)(b!)<(26!)/2.$$ This proves that the action is
transitive, and it follows that R is elementary abelian. Now
$$|{\rm Aut}(R)|= |{\rm GL}(3, 3)|<|{A}_{26}|\leqslant |M/C|.$$
This is a contradiction, so $RC=M$.

Let $Z=Z(R)=R\cap C$. Then $C/Z$ is isomorphic to $M/R$, which is
$A_{26}$. The Schur multiplier of ${A}_{26}$ has order 2, and it
follows that $C'\cap Z$ is trivial, and thus $C'$ is a normal
subgroup of $M$ isomorphic to ${A}_{26}$. It follows that $M$ is
the direct product of $R$ and $C'$. Let be write $A=C'$, so $A$
is isomorphic to ${A}_{26}$. Note that $A$ is characteristic in
$M$, so $A$ is normal in $G$. Now $G/R\cong {S}_{26}$ and $G/A$
has order 54, so $G$ is a subdirect product of $G_1={S}_{26}$ and
$G_2=G/A$. (Note that, every group of order 54 has a normal
subgroup of index 2.) Actually, since each of these groups has a
unique homomorphism onto the group of order $2$, it follows that
$G$ can be constructed as follows: let $\pi_1$ and $\pi_2$ be the
homomorphisms from $G_1$ and $G_2$ onto the group ${\mathbb
Z}_2$. Now, we consider the fibre product associated with $\pi_1$
and $\pi_2$, that is $$G=\{(a_1, a_2) \ | \
\pi_1(a_1)=\pi_2(a_2)\}.$$ Clearly, ${A}_{26}\times P$, where $P$
is a group of order 27, is a normal subgroup of $G$ and the map
$\pi$ on $G$ defined by
$$\pi((a_1, a_2))=\pi_1(a_1),$$ maps $G$ onto ${\mathbb Z}_2$ with
kernel ${A}_{26}\times P$, so $G/({A}_{26}\times P)$ is
isomorphic to ${\mathbb Z}_2$. This gives 15 groups, including
all direct products of ${S}_{26}$ with groups of order 27.
\end{itemize} This completes the proof of lemma and Theorem
\ref{th12}.
\end{proof}
\section{Non OD-characterizable alternating groups}
We start this section with a result of M. A. Zvezdina
\cite[Theorem]{zev} which is concerning simple groups whose prime
graphs coincide with the prime graph of an alternating simple
group. More precisely, she proved:

\begin{lm} \label{alt-n} {\rm \cite[Theorem]{zev}}
Let $G$ be an alternating group $A_n$, $n\geqslant 5$, and let $S$
be a finite simple group. Then the prime graphs of $G$ and $S$
coincide if and only if one of the following holds:
\begin{itemize}
\item[{\rm (a)}] $G=A_5$, $S=A_6$;

\item[{\rm (b)}] $G=A_6$, $S=A_5$;

\item[{\rm (c)}] $G=A_7$, $S\in \{L_2(49),U_4(3)\}$;

\item[{\rm (d)}] $G=A_9$, $S\in \{J_2,
S_6(2), O^+_8 (2)\}$;

\item[{\rm (e)}]  $G=A_n$, $S=A_{n-1}$, $n$ is odd, and the numbers $n$ and $n-4$ are
composite.
\end{itemize}
\end{lm}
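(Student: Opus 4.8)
The plan is to prove both implications, dispatching the backward direction by direct computation and the forward direction through the classification of finite simple groups. For $(\Longleftarrow)$ I would verify each listed case using the adjacency criteria already at hand. Cases (a) and (b) reduce to checking that $\Gamma(A_5)$ and $\Gamma(A_6)$ are both totally disconnected on the common vertex set $\{2,3,5\}$: by Corollary \ref{c-alternating} and Lemma \ref{adj-SA} none of the sums $2+3$, $2+5$, $3+5$ meets the relevant threshold in either group, so the two labelled graphs coincide. Cases (c) and (d) involve only finitely many groups, so I would compare $\Gamma(A_7)$ with $\Gamma(L_2(49))$ and $\Gamma(U_4(3))$, and $\Gamma(A_9)$ with $\Gamma(J_2)$, $\Gamma(S_6(2))$, $\Gamma(O_8^+(2))$, reading the spectra off the known data (e.g.\ \cite{atlas, vas-vdo}). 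The infinite family (e) is the cleanest: passing from $A_{n-1}$ to $A_n$ with $n$ odd, Corollary \ref{c-alternating} shows that the only candidate new adjacencies are the appearance of $n$ as a vertex (when $n$ is prime) and the edge $2\sim(n-4)$ (when $n-4$ is prime, since no two odd primes can sum to the odd number $n$); hence $\Gamma(A_n)=\Gamma(A_{n-1})$ holds precisely when both $n$ and $n-4$ are composite.

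The forward direction carries the real weight. Suppose $S$ is simple with $\Gamma(S)=\Gamma(A_n)$. Because the graphs are equal and not merely isomorphic, their vertex sets agree, so $\pi(S)=\pi(A_n)=\{p : p\leqslant n\}$ is an initial segment of the primes; in particular $|S|$ is divisible by every prime up to $n$ and by no larger prime. I would first exploit the large primes. By Corollary \ref{c-alternating} a prime $p$ close to $n$ is adjacent only to those $q$ with $p+q\leqslant n$, so the top of $\Gamma(A_n)$ has a characteristic shape: a cluster of isolated or near-isolated vertices hanging off the small primes. Matching this shape against the prime graph of a candidate $S$ constrains how the large primes must divide $|S|$, which for groups of Lie type ties back to primitive prime divisors of the factors $q^k-1$ in the order formula.

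With the vertex set and the behaviour of the large primes fixed, I would invoke the classification and run through the families, using the known descriptions of prime graphs of simple groups from \cite{k, w, vas-vdo, zav-maz}. Sporadic groups and Lie-type groups of small rank are compared directly and account for the coincidences in (c) and (d), while the alternating family $S=A_{n-1}$ supplies (e). The principal obstacle is the infinite supply of Lie-type groups over varying fields: one must show that $\pi(S)$ cannot be a full initial segment $\{p\leqslant n\}$ while simultaneously reproducing the $A_n$ adjacency pattern, except in the finitely many low-degree exceptions. I expect this elimination — forcing gaps in $\pi(S)$ that an initial segment of primes simply cannot have, via control of the primitive prime divisors and the exact order formulas — to be the hard part, and it is precisely here that the argument leans on CFSG rather than on the elementary adjacency criteria.
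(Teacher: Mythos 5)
There is nothing in the paper to compare against here: Lemma \ref{alt-n} is quoted verbatim from Zvezdina \cite{zev} and the paper supplies no proof of it, so your attempt has to stand or fall on its own. Your treatment of the backward direction is sound. In particular your analysis of case (e) is exactly right: for $n$ odd, passing from $\Gamma(A_{n-1})$ to $\Gamma(A_n)$ can only add the vertex $n$ (if $n$ is prime) or the edge $2\sim(n-4)$ (if $n-4$ is prime, since two odd primes cannot sum to the odd number $n$ and the threshold for adjacency to $2$ in $\Gamma(A_n)$ is $p\leqslant n-4$ by Corollary \ref{c-alternating}); cases (a)--(d) are finite checks against known spectra. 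That half is complete.

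The forward direction, however, is not a proof but a statement of intent, and it is precisely where the entire content of the theorem lives. ``Invoke the classification and run through the families'' and ``I expect this elimination \ldots to be the hard part'' defer the whole case analysis: one must actually show, for every sporadic group and every group of Lie type over every field, that $\pi(S)$ being the full initial segment of primes up to $n$ together with the $A_n$ adjacency pattern forces one of the listed conclusions. This requires detailed control of primitive prime divisors of $q^k-1$, the order formulas, and the adjacency criteria of \cite{vas-vdo} for each Lie family, and it occupies essentially all of Zvezdina's paper. Your observation that the vertex sets must agree (since the graphs are equal, not merely isomorphic) and that large primes near $n$ have constrained neighbourhoods is a correct starting point, but no family is actually eliminated. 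As written, the proposal establishes the easy implication and leaves the substantive one unproved; if the intent is to use the result as the paper does, the honest course is to cite \cite{zev} rather than to claim a proof.
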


Although the groups in the statement (e) of this lemma have the
same prime graph and so the same degree pattern, but they have
different orders. In fact, we have
$$|A_n|= |A_{n-1}|\times n. $$
Now, if we can choose the number $n$ such that $\pi(n)$ is
contained in the set of vertices of the prime graph $\Gamma(A_n)$
which are joined to all other vertices, then the groups $A_n$ and
$ A_{n-1}\times H$, where $H$ is an arbitrary group of order $n$,
have the same order and degree pattern. This will enable us to
give a positive answer to problem \ref{prob-1}.

Let $G$ be a finite group satisfying $|G|=|A_{m}|=m!/2$ and ${\rm
D}(G)={\rm D}(A_m)$. By \cite[Lemma 2.15]{kogani}, the prime
graph $\Gamma(G)$ coincides with $\Gamma({A}_m)$. Simply,
$\Gamma(G)$ is a graph with vertex set $\pi(G)=\{2, 3, 5, \ldots,
l_m\}$ in which two distinct vertices $r$ and $s$ are joined by
an edge iff $r+s\leqslant m$ if $r$ and $s$ are odd primes and
$r+2\leqslant m-2$ if $s=2$.

If $\triangle(m)\leqslant 2$ and $p=l_m$, then we will deal with
the alternating groups $A_p$, $A_{p+1}$ and $A_{p+2}$, which are
OD-charaterizable (see \cite[Theorem 1.5]{m-z-AC}). Therefore, we
may consider the alternating groups $A_m$ for which
$\triangle(m)\geqslant 3$.

Referring to the already mentioned fact (Proposition \ref{p2})
that all alternating groups $A_m$, $10\neq m\leqslant 100$, are
OD-characterizable, we restrict our attention to the alternating
groups $A_m$ where $m>100$. In what follows, we prove the
following result, which shows that there is an infinite family
$\{A_m\}$ of alternating groups such that $h_{\rm
OD}(A_m)\geqslant 3$.

\begin{proposition}\label{prop1} Let $m$ be an odd number satisfying
$\triangle(m)>4$ and $\pi(m)\subseteq \pi(\triangle(m)!)$. Then
$h_{\rm OD}(A_m)\geqslant 2$. In particular, if
$\triangle(3^n)>4$ (resp. $\triangle(5^n)>4$), then $h_{\rm
OD}(A_{3^n})\geqslant 16$ (resp. $h_{\rm OD}(A_{5^n})\geqslant
4$).
\end{proposition}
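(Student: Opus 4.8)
The plan is to exploit the coincidence of prime graphs between $A_m$ and $A_{m-1}$ supplied by Lemma \ref{alt-n}(e), and then to inflate $A_{m-1}$ by a direct factor of order $m$ without disturbing the degree pattern. First I would record that the hypotheses force both $m$ and $m-4$ to be composite: indeed $\triangle(m)>4$ means $l_m<m-4$, so none of $m, m-1, \ldots, m-4$ is prime. Since $m$ is odd, Lemma \ref{alt-n}(e) applies and yields $\Gamma(A_m)=\Gamma(A_{m-1})$; in particular $l_{m-1}=l_m$ and $\pi(A_{m-1})=\pi(A_m)$.

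Next I would pin down the set $\Lambda(A_m)$ of vertices adjacent to every other vertex. Using Corollary \ref{c-alternating}, an odd prime $p$ is joined to the largest vertex $l_m$ iff $p+l_m\leqslant m$, i.e. $p\leqslant\triangle(m)$, and $2$ is joined to $l_m$ iff $l_m\leqslant m-4$, which holds because $\triangle(m)>4$. A short check then shows that every prime $p\leqslant\triangle(m)$ is adjacent to all vertices, so the set of primes not exceeding $\triangle(m)$ is contained in $\Lambda(A_m)$. As $\pi(\triangle(m)!)$ is precisely that set of primes, the hypothesis $\pi(m)\subseteq\pi(\triangle(m)!)$ gives $\pi(m)\subseteq\Lambda(A_m)$.

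Now fix an arbitrary group $H$ with $|H|=m$ and put $G=A_{m-1}\times H$, so that $|G|=|A_{m-1}|\cdot m=|A_m|$. For the degree pattern I would compare prime graphs: passing to a direct product can only add edges, so $\Gamma(A_{m-1})\subseteq\Gamma(G)$. Any edge of $\Gamma(G)$ not already in $\Gamma(A_{m-1})$ must be incident with a vertex $q\in\pi(H)\subseteq\pi(m)$; but $q\in\Lambda(A_m)=\Lambda(A_{m-1})$ is already adjacent to everything in $\Gamma(A_{m-1})$, so no genuinely new edge can appear. Hence $\Gamma(G)=\Gamma(A_{m-1})=\Gamma(A_m)$, and therefore $\mathrm{D}(G)=\mathrm{D}(A_m)$. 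This is the crux of the argument and the only place where the arithmetic hypothesis does real work; confirming that the ``split'' edges between the two factors and the edges internal to $\pi(H)$ are all already present is exactly what $\pi(m)\subseteq\Lambda(A_m)$ guarantees, so I expect this verification to be the main obstacle.

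Finally I would count. Since $A_m$ is a nonabelian simple group (note $m$ is large here, so $m-1\geqslant 5$ and $A_{m-1}$ is simple, hence directly indecomposable) while $G=A_{m-1}\times H$ possesses the proper nontrivial normal subgroup $A_{m-1}$, no such $G$ is isomorphic to $A_m$; and by the Krull--Schmidt theorem, $A_{m-1}\times H\cong A_{m-1}\times H'$ forces $H\cong H'$, so distinct $H$ yield non-isomorphic $G$. Thus $h_{\rm OD}(A_m)\geqslant\nu(m)+1\geqslant 2$. For the displayed special cases I would use that $\triangle(3^n)>4$ forces $n\geqslant 4$ (the values $n\leqslant 3$ are checked directly), whence $\nu(3^n)\geqslant\nu(3^4)=15$ and $h_{\rm OD}(A_{3^n})\geqslant 16$; likewise $\triangle(5^n)>4$ forces $n\geqslant 3$, whence $\nu(5^n)\geqslant\nu(5^3)=5$ and $h_{\rm OD}(A_{5^n})\geqslant 4$. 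The one genuine subtlety in this last step is the monotonicity and lower bounds for the number of groups of prime-power order, which I would simply quote from standard references.
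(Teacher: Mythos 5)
Your proposal is correct and follows essentially the same route as the paper: it invokes Zvezdina's lemma to get $\Gamma(A_m)=\Gamma(A_{m-1})$, uses $\pi(m)\subseteq\Lambda(A_m)$ to see that $A_{m-1}\times H$ has the degree pattern of $A_m$, and counts the resulting groups via $\nu(m)$. The only differences are cosmetic: you verify $\pi(\triangle(m)!)\subseteq\Lambda(A_m)$ directly from the adjacency criterion where the paper cites a lemma of Kogani--Moghaddamfar, you make explicit the Krull--Schmidt non-isomorphism count the paper leaves implicit, and for the $3^n$ case you reach the bound $16$ via $n\geqslant 4$ and $\nu(3^4)=15$ rather than the paper's $n\geqslant 7$ and $|\mathrm{Par}(7)|=15$ --- both are valid.
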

\begin{proof} First of all, it follows by Lemma 2.17 in \cite{kogani} that $\pi(\triangle(m)!)\subseteq
\Lambda(A_m)$, this would mean that every vertex in
$\pi(\triangle(m)!)$ and so in $\pi(m)$ is adjacent to all other
vertices in $\pi(A_m)$. Furthermore, since $m$ is an odd number
with $\triangle(m)>4$, the prime graphs $\Gamma(A_m)$ and
$\Gamma(A_{m-1})$ coincide by Lemma \ref{alt-n}. Now, if $H$ is
an arbitrary group of order $m$, then the groups $A_m$ and
$A_{m-1}\times H$ have the same order and degree pattern, and
hence
$$h_{\rm OD}(A_m)\geqslant 1+\nu(m)\geqslant 2.$$

In the case when $\triangle(3^n)>4$, it is routine to check that
$n\geqslant 7$, and so
$$h_{\rm OD}(A_{3^n})\geqslant 1+\nu(3^n)\geqslant 1+\nu_a(3^n)=1+|{\rm Par}(n)|\geqslant 1+|{\rm Par}(7)|=16.$$
Similarly, if $\triangle(5^n)>4$, then $n\geqslant 3$, and we
obtain
$$h_{\rm OD}(A_{5^n})\geqslant 1+\nu(5^n)\geqslant 1+\nu_a(5^n)=1+|{\rm Par}(n)|\geqslant 1+|{\rm Par}(3)|=4.$$
The proof is complete.
\end{proof}

{\em Proof of Theorem $\ref{th-infinite-alt}$}.  The result
follows immediately from Lemma \ref{newlemmaprime} and
Proposition \ref{prop1}. Note that, the proof of Proposition
$\ref{prop1}$ shows that there is no upper bound to $h_{\rm
OD}(A_m)$. $\Box$

By Proposition \ref{prop1}, we can find many examples of
alternating groups $A_m$ with $h_{\rm OD}(A_m)\geqslant 3$. We
point out here some of them.

\begin{itemize}
\item[{\rm (4.a)}]  {\em Some alternating groups $A_{m}$, $m\leqslant 1000$, with $h_{\rm OD}(A_m)\geqslant 3$.}

In this case, we obtain the following simple groups amongst
$A_{m}$ with $h_{\rm OD}(A_m)\geqslant 3$ (see Table 1):
$$A_{125}, \ \ A_{147}, \ \ A_{189}, \ \ A_{539}, \ \ A_{625}, \ \ A_{875},$$
and for each of these groups, we have (see Table 1):

\subitem{\rm (1)}  \   $h_{\rm OD}(A_{125})\geqslant
1+\nu(125)=6,$ (see also \cite{mahmoodifar-sib}),

\subitem{\rm (2)}  \  $h_{\rm OD}(A_{147})\geqslant 1+\nu(147)=7,$

\subitem{\rm (3)}  \  $h_{\rm OD}(A_{189})\geqslant
1+\nu(189)=14,$

\subitem{\rm (4)}  \  $h_{\rm OD}(A_{539})\geqslant 1+\nu(539)=3,$

\subitem{\rm (5)} \  $h_{\rm OD}(A_{625})\geqslant 1+\nu(625)=16,$

\subitem{\rm (6)} \  $h_{\rm OD}(A_{875})\geqslant 1+\nu(875)=6.$
\begin{center}
{\bf Table 1.}\\[0.3cm]
\begin{tabular}{llccll}
\hline $m$  &  $m-4$ &  $l_m$&  $\triangle(m)$ &
$\pi(\triangle(m)!)$ & $\nu(m)$
\\
\hline \\[-0.2cm]
 $125=5^3$ & $121=11^2$ &  $113$ &  $12$ & $2, 3, 5, 7, 11$ & $5$\\[0.2cm]
$147=3\cdot 7^2$ &  $143=11\cdot 13$ &  $139$ &  $8$ & $2, 3, 5, 7$ & $6$ \\[0.2cm]
$189=3^3\cdot 7$ & $185=5\cdot 37$ & $181$ & $8$ &  $2, 3, 5, 7$ & $13$\\[0.2cm]
$539=7^2\cdot 11$ & $535=5\cdot 107$ & $523$ & $16$ & $2, 3, 5, 7, 11, 13$ & $2$\\[0.2cm]
$625=5^4$ & $621=3^3\cdot 23$ & $619$ & $6$ & $2, 3, 5$ & $15$ \\[0.2cm]
$875=5^3\cdot 7$ & $871=13\cdot 67$ & $863$ &  $12$ & $2, 3, 5, 7, 11$ & $5$ \\
\hline
\end{tabular}
\end{center}

We shall use Lemma \ref{newlemmaprime} to show that there are
infinite families of alternating groups $A_n$, with $h_{\rm
OD}(A_n)\geqslant 3$.

\item[{\rm (4.b)}]  {\em An infinite family of alternating groups $A_{3^n}$, with $h_{\rm OD}(A_{3^n})\geqslant 136$.}

The existence of infinite number of values $n$ for which
$\triangle(3^n)>4$ is immediate from Lemma \ref{newlemmaprime}.
Indeed, if we take $n$ to be a natural number such that $n\equiv
14 \pmod{144}$, then it follows directly that $\{7, 17\}\subseteq
\pi(3^n-2)$ and similarly $\{5, 19\}\subseteq \pi(3^n-4)$, so the
numbers $3^n-2$ and $3^n-4$ are composite. This shows that
$\triangle(3^n)\geqslant 8$, for all $n\equiv 14 \pmod{144}$
(note that, this provides an alternate proof of Lemma
\ref{newlemmaprime} for $p=3$). Reasoning as in the proof of
preceding Proposition \ref{prop1}, we have
$$h_{\rm OD}(A_{3^n})\geqslant 1+\nu(3^n)\geqslant 1+\nu_a(3^n)=1+|{\rm Par}(n)|\geqslant 1+|{\rm Par}(14)|=136,$$
where ${\rm Par}(n)$ denotes the set of all partitions of $n$. In
particular, we have $h_{\rm OD}(A_{3^{14}})\geqslant 136$.

\item[{\rm (4.c)}]  {\em An infinite family of alternating groups $A_{5^n}$, with $h_{\rm
OD}(A_{5^n})\geqslant 4$.}

By Lemma \ref{newlemmaprime} again, there exist infinitely many
values of $n$ for which $\triangle(5^n)>4$. Now for every such
$n$, we have $h_{\rm OD}(A_{5^n})\geqslant 4$ by Proposition
\ref{prop1} (see also \cite{mahmoodifar-arxiv}).
\end{itemize}
\section{On the symmetric groups $S_m$ with $h_{\rm OD}(S_m)\geqslant 4$}
In this section we are looking for finite non-isomorphic groups
having the same order and degree pattern as a symmetric group.
Suppose that $G$ is a finite group satisfying $|G|=|S_{m}|=m!$
and ${\rm D}(G)={\rm D}(S_m)$, for some natural number $m$. First
of all, we conclude from \cite[Lemma 2.15]{kogani} that the prime
graph $\Gamma(G)$ coincides with $\Gamma(S_m)$. Actually,
$\Gamma(G)$ is a graph with vertex set $\pi(G)=\{2, 3, 5, \ldots,
l_m\}$ in which two distinct vertices $r$ and $s$ are joined by
an edge iff $r+s\leqslant m$. In the case when
$\triangle(m)\leqslant 1$, we deal with the symmetric groups
$S_p$ and $S_{p+1}$, which are OD-charaterizable by \cite[Theorem
1.5]{m-z-AC}. We now consider the symmetric groups $S_m$ for which
$\triangle(m)\geqslant 2$, that is
$$S_{p+2}, \ \ S_{p+3}, \ \ S_{p+4}, \ \ \ldots, \ \  S_{p+\triangle(m)}, $$
where $p=l_m$. On the other hand, in view of \cite[Theorem
1.7]{kogani} and Theorem \ref{th12}, it follows that all
symmetric groups $S_m$, where $10, 27\neq m\leqslant 100$, are
$3$-fold OD-characterizable, and for this reason we now restrict
our attention to the the symmetric groups $S_m$ where $m>100$.
Here, we first prove the following general result:

\begin{proposition}\label{prop2} Let $m$ be an odd number satisfying $\triangle(m)>4$
and $\pi(m)\subseteq \pi(\triangle(m)!)$. Then $h_{\rm
OD}(S_m)\geqslant 4$.
\end{proposition}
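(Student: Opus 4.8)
The plan is to mirror the strategy of Proposition \ref{prop1}, but to exploit the fact that in the symmetric setting several prime graphs coincide simultaneously, which manufactures more groups of the prescribed degree pattern than in the alternating case. Concretely, I will exhibit $3+\nu(m)$ pairwise non-isomorphic groups, each of order $m!$ and each with degree pattern ${\rm D}(S_m)$; since $\nu(m)\geqslant 1$ this yields $h_{\rm OD}(S_m)\geqslant 3+\nu(m)\geqslant 4$. The ``core'' groups are $S_m$ itself, the direct product $\mathbb{Z}_2\times A_m$, the non-split double cover $2\cdot A_m$, and the products $S_{m-1}\times H$ ranging over the $\nu(m)$ isomorphism types of groups $H$ of order $m$.

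The first step is to record the graph coincidences on which everything rests. Using Corollary \ref{c-symmetric}, a prime $p\leqslant\triangle(m)$ satisfies $p+l_m\leqslant\triangle(m)+l_m=m$, so $p$ is adjacent to the largest vertex $l_m$ and hence to every vertex; thus $\pi(\triangle(m)!)\subseteq\Lambda(S_m)$. Next, $m$ is odd, so $m$ and $m-2$ are both odd; since $\triangle(m)>4$ we have $l_m<m-4<m-2<m$, so both $m$ and $m-2$ exceed the largest prime not exceeding $m$ and are therefore composite, whence Lemma \ref{newlemma}(1) gives $\Gamma(S_m)=\Gamma(S_{m-1})$. Finally, comparing Corollaries \ref{c-alternating} and \ref{c-symmetric} shows $\Gamma(A_m)=\Gamma(S_m)$: the odd--odd rule $p+q\leqslant m$ is identical, and the only possible discrepancy, at the vertex $2$, vanishes because $\triangle(m)>4$ gives $l_m\leqslant m-5$, so every odd prime $q\leqslant l_m$ satisfies both $q\leqslant m-4$ and $q\leqslant m-2$; in particular $2\in\Lambda(A_m)$.

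The second step is to verify that each core group has degree pattern ${\rm D}(S_m)$, the governing principle being that attaching a direct factor whose primes are all \emph{complete} vertices of $\Gamma(S_m)$ creates no new edges. For $S_{m-1}\times H$ the vertex set is $\pi(S_{m-1})=\pi(S_m)$ (as $m$ is composite), and $\pi(H)=\pi(m)\subseteq\pi(\triangle(m)!)\subseteq\Lambda(S_m)$, so any edge the product could acquire is already present in $\Gamma(S_{m-1})=\Gamma(S_m)$. For $\mathbb{Z}_2\times A_m$ the only new edges would be incident to $2$, but $2\in\Lambda(A_m)=\Lambda(S_m)$. For $2\cdot A_m$ I will use that, the kernel being central of order $2$, odd-order elements lift bijectively (Schur--Zassenhaus applied to the preimage of an odd-order cyclic subgroup), so $2\cdot A_m$ and $A_m$ have the same odd element orders and hence the same odd--odd adjacencies, while multiplying an odd-order lift by the central involution gives elements of order $2p$ for every odd $p\in\pi(A_m)$, so again $2$ is complete. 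Thus all four prime graphs equal $\Gamma(S_m)$, and equal graphs give equal degree patterns.

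The final step is to separate the groups up to isomorphism. The group $S_m$ has trivial center while the other three do not; $2\cdot A_m$ is perfect whereas $\mathbb{Z}_2\times A_m$ has abelianization $\mathbb{Z}_2$; and by Jordan--H\"older the simple group $A_m$ is a composition factor of both covers but not of any $S_{m-1}\times H$, whose largest nonabelian composition factor is $A_{m-1}$. Krull--Schmidt, applied with the common directly indecomposable factor $S_{m-1}$, then shows the $\nu(m)$ products $S_{m-1}\times H$ are pairwise non-isomorphic. Hence the $3+\nu(m)$ groups are distinct and $h_{\rm OD}(S_m)\geqslant 3+\nu(m)\geqslant 4$. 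The main obstacle is the spectrum computation for the non-split cover $2\cdot A_m$: one must ensure the central extension exists (the hypotheses force $m$ large, so $m\notin\{6,7\}$ and the Schur multiplier of $A_m$ is exactly $\mathbb{Z}_2$) and that passing to it neither creates nor destroys odd--odd adjacencies; once the lifting argument is in place, the remaining verifications are routine.
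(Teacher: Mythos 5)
Your proposal is correct and follows essentially the same route as the paper: both establish that $\pi(\triangle(m)!)\subseteq\Lambda(S_m)$ and that the relevant prime graphs coincide, and both then exhibit $S_m$, $\mathbb{Z}_2\times A_m$, $\mathbb{Z}_2\cdot A_m$ and $S_{m-1}\times H$ as witnesses (the paper additionally lists $(\mathbb{Z}_2\times A_{m-1})\times H$, $(\mathbb{Z}_2\cdot A_{m-1})\times H$ and $A_{m-1}\times K$ with $|K|=2m$, though it still only claims the bound $4$). The only difference is cosmetic: you derive the graph coincidences directly from Corollaries \ref{c-alternating} and \ref{c-symmetric} and Lemma \ref{newlemma}, where the paper cites Lemma 2.17 of \cite{kogani} and Lemma \ref{alt-n}, and you spell out the spectrum and non-isomorphism verifications that the paper leaves implicit.
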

\begin{proof} It follows by Lemma 2.17 in \cite{kogani} that $\pi(\triangle(m)!)\subseteq
\Lambda(S_m)$, this would mean that every vertex in
$\pi(\triangle(m)!)$ and so in $\pi(m)$ is adjacent to all other
vertices in $\pi(S_m)$. Moreover, since $m$ is an odd number with
$\triangle(m)>4$, the prime graphs $\Gamma(S_m)$,
$\Gamma(S_{m-1})$, $\Gamma(A_m)$ and $\Gamma(A_{m-1})$ coincide by
Corollary \ref{c-symmetric} and  Lemma \ref{alt-n} (see also
\cite[Table 7]{kogani}). Now, if $H$ and $K$ are two arbitrary
groups of order $m$ and $2m$, respectively, then the groups $S_m$,
$\mathbb{Z}_2\times A_m$,  $\mathbb{Z}_2\cdot A_m$,
$S_{m-1}\times H$,  $(\mathbb{Z}_2\times A_{m-1})\times H$,
$(\mathbb{Z}_2\cdot A_{m-1})\times H$ and $A_{m-1}\times K$, have
the same order and degree pattern, and hence $h_{\rm
OD}(S_m)\geqslant 4$. The proof is complete.
\end{proof}

{\em Proof of Theorem $\ref{th-infinite-sym}$}.  The result
follows immediately from Lemma \ref{newlemmaprime} and Proposition
\ref{prop2}. Note that, the proof of Proposition $\ref{prop2}$
shows that there is no upper bound to $h_{\rm OD}(S_m)$. $\Box$

Considering Proposition \ref{prop2}, we can now find many examples
of symmetric groups $S_m$ satisfying  $h_{\rm OD}(S_m)\geqslant
4$. We point out here some of them.

\begin{itemize}
\item[{\rm (5.a)}]  {\em Some symmetric groups $S_{m}$, $m\leqslant 1000$, with $h_{\rm OD}(S_m)\geqslant 4$.}

As before in (4.a), we can obtain the following symmetric groups
amongst $S_{m}$, which are $k$-fold OD-characterizable with
$k\geqslant 4$ (see Table 1):
$$S_{125}, \ \ S_{147}, \ \ S_{189}, \ \ S_{539}, \ \ S_{625}, \ \ S_{875}.$$
The case $S_{125}$ had already been studied in
\cite{mahmoodifar-sib}.

\item[{\rm (5.b)}]  {\em There is an infinite family of symmetric groups
$S_{p^n}$, with $p\in \{3, 5\}$, such that $h_{\rm OD}(S_{p^n})\geqslant 4$.}

Actually, reasoning as before in (4.b) and (4,c), there are an
infinite number of values $n$ for which $\triangle(p^n)>4$, and
the result is now immediate from Proposition \ref{prop2} (see
also \cite{mahmoodifar-arxiv}).
\end{itemize}

We conclude this paper with some comments for future works:

\begin{itemize}
\item[{$(1)$}] Specify the exact value of $h_{\rm OD}(A_n)$ (resp.
$h_{\rm OD}(S_n)$) for alternating (resp. symmetric)) groups of
degree $n$ for $n\in \{125, \ 147, \ 189, \ 539, \ 625, \ 875\}$.

\item[{$(2)$}] It seems that it should be possible to prove that the value
of $h_{\rm OD}(A_n)$ (resp. $h_{\rm OD}(S_n)$) is bounded by some
function of $n$.
\end{itemize}

\begin{center}
 {\bf Acknowledgments}
\end{center}
The author would like to express his sincere thanks to Prof. I.
M. Isaacs for the very helpful comments and suggestions to
improve the contents and presentation of the manuscript. The
author also would like to thank Professor Andrei V. Zavarnitsine
and Professor Robert Styer for their valuable comments and
discussions. Finally, he also wishes to thank Dr. A. Mahmoudifar
for pointing out references \cite{mahmoodifar-sib,
mahmoodifar-arxiv} and for carefully reading the manuscript.

\end{document}